\newtheorem{theorem}{Theorem}[section]
\newtheorem{proposition}[theorem]{Proposition}
\newtheorem{lemma}[theorem]{Lemma}
\newtheorem{corollary}[theorem]{Corollary}
\numberwithin{equation}{section}
\theoremstyle{definition}
\newtheorem{definition}[theorem]{Definition}
\newtheorem{example}[theorem]{Example}
\newtheorem{remark}[theorem]{Remark}
\newcommand{\Q}{\mathbb{Q}}
\newcommand{\C}{\mathbb{C}}
\newcommand{\Z}{\mathbb{Z}}
\newcommand{\R}{\mathbb{R}}
\newcommand{\N}{\Z_{\geq 1}}
\newcommand{\kk}{{\bf k}}
\newcommand{\lsm}{\lambda \slash \mu}
\newcommand{\sfm}{S_M^{f}}
\newcommand{\zr}{\zeta^{\rm{reg}}}
\DeclareMathOperator{\SSYT}{\operatorname{SSYT}}
\newcommand{\adots}{\raisebox{-2pt}{$\iddots$}} 
\newcommand{\svdots}{\raisebox{-2pt}{$\vdots$}} 
\newcommand{\vsmall}{\rotatebox[origin=c]{-90}{$<$}}
\definecolor{gray}{RGB}{230,230,230}  
\definecolor{darkgray}{RGB}{130,130,130}  
\definecolor{gray1}{RGB}{138, 237, 226}
\definecolor{gray3}{RGB}{240, 162, 132}
\definecolor{gray2}{RGB}{132, 240, 145}
\definecolor{gray4}{RGB}{240, 236, 132}
\title[Generalized Jacobi-Trudi determinants and evaluations of Schur MZVs]{Generalized Jacobi-Trudi determinants \\and evaluations of Schur multiple zeta values}
\author{Henrik Bachmann}
\address{Graduate School of Mathematics,  Nagoya University, Nagoya, Japan.}
\email{henrik.bachmann@math.nagoya-u.ac.jp}
\author{Steven Charlton}
\address{Max Planck Institute for Mathematics, Vivatsgasse 7,
	Bonn 53111, Germany.}
\email{stven.charlton@gmail.com}
\subjclass[2010]{Primary 11M32; Secondary  05E05}
\keywords{Multiple zeta values, Schur multiple zeta values, Jacobi-Trudi formula, Schur functions}
\begin{document}
	
	\begin{abstract}
		We present new determinant expressions for regularized Schur multiple zeta values. These generalize the known Jacobi-Trudi formulae and can be used to quickly evaluate certain types of Schur multiple zeta values. Using these formulae we prove that every Schur multiple zeta value with alternating entries in $1$ and $3$ can be written as a polynomial in Riemann zeta values. Furthermore, we give conditions on the shape, which determine when such Schur multiple zetas are polynomials purely in odd or in even Riemann zeta values.
	\end{abstract}

	\date{14 August 2019}
	\maketitle
	
	\section{Introduction}
	The purpose of this paper is to give a generalized Jacobi-Trudi determinant expression for regularized Schur multiple zeta values. Using this determinant expression we obtain new explicit evaluations of certain Schur multiple zeta values. This generalizes results given in \cite{NPY} and answers questions posed in \cite{BY}.
	
	The literature on symmetric functions contains many determinantal results for (skew) Schur functions: two types of Jacobi-Trudi determinants \cite{M}, the Giambelli determinant \cite{G}, Lascoux and Pragacz's rim ribbon determinant \cite{LP} and the determinant expression of Hamel and Goulden \cite{HG} based on outside decompositions, which generalizes all the aforementioned ones. 
	
	The proofs of all of these determinant expressions are based on the Gessel-Viennot methods \cite{GV}, in which one constructs lattice graphs, assigns weights to their edges and then writes the weights of disjoint path systems as determinants of matrices whose entries are given by the weights of individual paths. The weights of path systems then corresponds to  Schur functions $s_{\lsm}$, where $\lsm$ is an arbitrary skew Young diagram,  and the entries in the matrix correspond to Schur functions of so-called ribbons. In the case of Jacobi-Trudi determinants, these ribbons are given by columns or by rows, which then give the complete homogeneous or elementary symmetric functions respectively. This, in particular, gives the well-known result that every Schur function can be written as a polynomial expression in complete homogeneous functions or elementary symmetric functions. 
	
	Usually, the weights of edges in graphs appearing in these proofs depend only on the horizontal position in the graph. In \cite{NPY} it was observed, that these proofs can also be adapted to the case of Schur multiple zeta values by, roughly speaking, making the weights of edges also depend on their horizontal position. Schur multiple zeta values can be viewed as a variation of Schur functions, which assign to a Young Tableau $\kk$ (that is, a Young diagram filled with integers in each box) the real number $\zeta(\kk)$. For integers $a,c,d,e\geq 1,\,b,f \geq 2$ the following sum is an example of a Schur multiple zeta value    
	\begin{equation} \label{eq:3stair}
	\zeta\left(\ {\footnotesize \ytableausetup{centertableaux, boxsize=1.2em}
		\begin{ytableau}
		\none & a& b \\
		c& d\\
		e & f
		\end{ytableau}}\ \right)
	= \sum_{{\scriptsize
			\arraycolsep=1.4pt\def\arraystretch{0.8}
			\begin{array}{ccccc}
			&&m_a&\leq&m_b \\
			&&\vsmall&& \\
			m_c&\leq&m_d&& \\
			\vsmall&&\vsmall&& \\
			m_e&\leq&m_f&&
			\end{array} }} \frac{1}{m_a^{\,a} \,\, m_b^b \,\,  m_c^c \,\,  m_d^d \,\,  m_e^e \,\,  m_f^f}  \,,
	\end{equation}
	where we assume the summation indices $m_\ast$ range over positive integers. Schur multiple zeta values, which were first introduced by Y. Yamasaki, also generalize the classical multiple zeta(-star) values, which for $k_1,\dots,k_{r-1} \geq 1, k_r \geq 2$ are defined by
	\begin{equation} \label{eq:mzv}
	\begin{split}
	\zeta(k_1,\ldots,k_r)=\sum_{0<m_1<\cdots<m_r} \frac{1}{m_1^{k_1}\cdots m_r^{k_r}} \,,\quad
	\zeta^\star(k_1,\ldots,k_r)=\sum_{0<m_1\leq\cdots \leq m_r} \frac{1}{m_1^{k_1}\cdots m_r^{k_r}} \,.
	\end{split}
	\end{equation}
	Schur multiple zeta values specialize to these numbers by choosing columns or choosing rows as the Young tableaux, respectively.
	
	In \cite{NPY} the authors give Jacobi-Trudi formulae for Schur multiple zeta values of Young tableaux with constant diagonal entries and show that these can be written as determinants whose entries are given by the multiple zeta(-star) values \eqref{eq:mzv}.
	We will generalize these formulae and show (Theorem \ref{thm:jacobiformzv}) that there exist explicit determinant expression for these Schur multiple zeta values as determinants in Schur multiple zeta values of subribbons of a fixed ribbon $R$. 
	Choosing the ribbon $R$ to a row or a column then specializes to the Jacobi-Trudi formula proven in \cite{NPY}. For example, if we choose the ribbon $\ytableausetup{centertableaux, boxsize=0.5em} R = {\footnotesize 
		\begin{ytableau}
		\none & \,& \,\\
		\none & \,\\
		\,& \,
		\end{ytableau}}$ then the  Schur multiple zeta value \eqref{eq:3stair} (where we  set $f=c$ to make the entries constant on the diagonals) can be written as
	\ytableausetup{centertableaux, boxsize=1.2em}
	\begin{equation*} 
	\zeta\left(\ {\footnotesize 
		\begin{ytableau}
		\none & a& b \\
		c& d\\
		e & c
		\end{ytableau}}\ \right) = 
	\det
	\begin{pmatrix}
	\zeta\left(\ {\footnotesize 
		\begin{ytableau}
		\none & a& b \\
		\none & d\\
		e & c
		\end{ytableau}}\ \right)
	& 
	\zeta\left(\ {\footnotesize 
		\begin{ytableau}
		a& b \\
		d\\
		c
		\end{ytableau}}\ \right)
	\\[20pt]
	\zeta\left(\ {\footnotesize 
		\begin{ytableau}
		e & c
		\end{ytableau}}\ \right)
	&
	\zeta\left(\ {\footnotesize 
		\begin{ytableau}
		c
		\end{ytableau}}\ \right)
	\end{pmatrix}\,.
	\end{equation*}
	We will prove these determinant expressions in Theorem \ref{thm:jacobitrudi} for a more general object $\sfm$ (Definition \ref{def:genschurpol}), which generalizes (truncated) Schur multiple zeta values and Schur polynomials simultaneously. By extending the notion of stuffle regularized multiple zeta values to the Schur case, we will obtain a generalized Jacobi-Trudi determinant expression for regularized Schur multiple zeta values in Theorem \ref{thm:jacobiformzv}.
	
	One nice application of this generalized determinant expression, which was also the original motivation for this project, is the evaluation of Schur multiple zeta values of Young tableaux with alternating entries in $1$ and $3$. These were studied in detail in the work \cite{BY}, where the authors consider in general so-called ``Checkerboard style'' Schur multiple zeta values. There it was shown that the stairs with alternating entries of $1$ and $3$ evaluate in an especially nice way, to multiples of odd single zeta values, e.g.
	\begin{equation}\label{eq:13stariexample}
	\zeta\left(\ {\footnotesize \begin{ytableau}
		1 & *(gray)3  \\
		*(gray)3
		\end{ytableau}}\ \right) 
	= \frac{1}{4} 
	\zeta(7) \,,\qquad
	\zeta\left(\ {\footnotesize \begin{ytableau}
		\none & \none & 1 \\
		\none & 1 & *(gray)3\\
		1 & *(gray)3
		\end{ytableau}}\ \right)  = \frac{1}{8} \zeta(9) \,. 
	\end{equation}
	We will show that any Schur multiple zeta values with alternating entries in $1$ and $3$ can be written as a polynomial in single zeta values (Theorem \ref{thm:13zeta}). For example, we obtain for the $3\times3$ square the evaluation
	\begin{align}\label{eq:3x3square13}
	\zeta\left(\ {\footnotesize
		\begin{ytableau}
		*(gray)3 & *(white)1 & *(gray)3\\
		*(white)1 & *(gray)3& *(white)1\\
		*(gray)3 & *(white)1 & *(gray)3\\
		\end{ytableau}}\ \right) &=\frac{1}{32} \det
	\begin{pmatrix}
	\zeta(3) & \frac{\pi^4}{180} & \zeta(7)\\
	\frac{\pi^4}{72} & \zeta(5) & \frac{17\pi^8}{90720} \\
	\zeta(7) &  \frac{13\pi^8}{226800} & \zeta(11)
	\end{pmatrix}\,.
	\end{align}
	
	Choosing the ribbon $R$ to be a stair like in \eqref{eq:13stariexample} and using our generalized Jacobi-Trudi determinant, we will further give explicit conditions for in which cases such Schur multiple zetas are polynomials in  $\pi^4$ or polynomials in only odd single zeta values (Corollary \ref{cor:13tessel}). Similar statements are also be obtained for the case of Schur multiple zeta values with alternating entries in $1$ and $2$ (Corollary \ref{cor:12tessel}).
	
	\subsection*{Acknowledgement} This work started during the Trimester Program `Periods in Number Theory, Algebraic Geometry, and Physics' at the Hausdorff Research Institute for Mathematics in Bonn. The authors would like to thank the organizers of this program and the Max-Planck-Institut f\"ur Mathematik in Bonn for hospitality and support. This project was partially supported by JSPS KAKENHI Grant 19K14499.
	
	\section{Jacobi-Trudi determinants}
	
	The goal of this section is to prove a generalized Jacobi-Trudi type determinant formula for a generalization of Schur polynomials (Definition \ref{def:genschurpol}).
	A partition is a tuple $\lambda = (\lambda_1,\dots,\lambda_h)$ of positive integers
	$\lambda_1 \geq \dots \geq \lambda_h \geq 1$.
	For another partition $\mu=(\mu_1,\dots,\mu_r)$ we write $\mu \subset \lambda$ if $r\leq h$ and $\mu_j \leq \lambda_j$ for $j=1,\dots,r$.  
	For partitions $\lambda,\mu$ with $\mu \subset \lambda$ we identify the pair $\lambda\slash\mu=(\lambda,\mu)$ with its (skew) Young diagram
	\[D(\lambda \slash \mu) = \left\{(i,j) \in \Z^2 \mid 1 \leq i \leq h\,, \mu_i < j \leq \lambda_i \right\},\]
	where we set $\mu_j = 0$ for $j>r$.

	A Young tableau $\kk =  (k_{i,j})_{(i,j) \in D(\lambda \slash \mu)}$ of shape $\lambda \slash \mu$ is a filling of $D(\lambda\slash\mu)$ obtained by putting $k_{i,j}\in\N$ into the $(i,j)$-entry of $D(\lambda\slash\mu)$. For shorter notation we will also just write $(k_{i,j})$ in the following if the shape $\lambda \slash \mu$ is clear from the context. 
	For example when $\lambda\slash\mu=(5,4,3)\slash (3,1)$ we visualize this Young tableau as
	\[
	\kk \,\, = \,\, (k_{i,j}) \,\, = \,\, {\footnotesize \ytableausetup{centertableaux, boxsize=2em}
		\begin{ytableau}
		\none & \none & \none & k_{1,4} & k_{1,5} \\
		\none & k_{2,2} & k_{2,3} & k_{2,4} \\
		k_{3,1} & k_{3,2} & k_{3,3} 
		\end{ytableau}}\,. 
	\]
	A Young tableau $(m_{i,j})$ is called semi-standard if $m_{i,j}<m_{i+1,j}$ and $m_{i,j}\leq m_{i,j+1}$ for all $i$ and $j$.
	The set of all Young tableaux and all semi-standard Young tableaux of shape $\lambda \slash \mu$ are denoted by $T(\lambda \slash \mu)$ and 
	$\SSYT(\lambda \slash \mu)$, respectively. For $M \in \Z_{>0}$ we further define the set of restricted semi-standard Young tableaux by
	\[\SSYT_M(\lambda \slash \mu) = \big\{ (m_{i,j}) \in  \SSYT(\lambda \slash \mu) \mid m_{i,j} < M \big\}\,. \]
	
	\begin{definition} \label{def:genschurpol}Let $A$ be a ring. For a map 
		\[ f: \Z_{>0} \times  \Z_{>0} \rightarrow A\,, \]
		a skew diagram $\lsm$ , a Young tableaux $\kk =(k_{i,j})\in T(\lsm)$ and an integer $M\in \Z_{>0}$ we define
		\begin{equation}\label{eq:defs}
		\sfm(\kk) = \sum_{(m_{i,j}) \in \SSYT_M(\lambda \slash \mu)} \,\, \prod_{(i,j) \in D(\lambda \slash \mu)} f(m_{i,j}, k_{i,j}) 
		\end{equation}
		and set $\sfm(\emptyset) = 1$.
	\end{definition}
	
	This object generalizes the classical skew Schur polynomials\footnote{Which give the Schur functions in the case $M \rightarrow \infty$.} $s_{\lsm}(x_1,\dots,x_M)$ in the case $A=\Q[x_1,\dots,x_M]$, $f(m,d)=x_m$ and it gives the truncated Schur multiple zeta values $\zeta_M(\kk)$ in the case $A=\R$, $f(m,d) = m^{-d}$ (see \eqref{eq:defschurmzv}). The latter will be discussed in detail in Section \ref{sec:smzv}. In \cite{HG} the authors proved a generalized Jacobi-Trudi formula for Schur functions, which we will extend in this section to the above functions $\sfm$. For this, we will need to introduce some notation from \cite{HG}. A \emph{ribbon}, also called a strip in \cite{HG}, is a skew diagram which does not contain a $2\times 2$ square. Clearly, every skew diagram can be decomposed, in various ways, into a disjoint union of ribbons. We will be interested in a specific decomposition in which all the starting and ending boxes of the ribbons are attached to the outside of the skew diagram.
	
	\begin{definition}
		Suppose that $\theta_1,\dots,\theta_n$ are ribbons in a Young diagram $\lsm$ and each ribbon has a starting box on the left or bottom perimeter of the diagram and an ending box on the right or top perimeter of the diagram. Then if the disjoint union of these ribbons equals  $\lsm$ we say that $\Theta=(\theta_1,\dots,\theta_n)$ is a \emph{outside decomposition} of $\lsm$.
	\end{definition}

	\begin{example} \label{ex:outdec} For the case $\lsm = (4,3,3,2,1)\slash (1)$ a possible outside decomposition of $\lsm$ is given by $\Theta= (\theta_1,\theta_2,\theta_3,\theta_4)$ as follows. 
		
		\begin{center}
			\ytableausetup{centertableaux, boxsize=1.2em}
			\begin{minipage}[c]{0.5\linewidth}
				\begin{align*}
				\lsm = \begin{ytableau}
				\none & *(gray4)\,  & *(gray3)\,  & *(gray3)\, \\
				*(gray3)\,  & *(gray3)\, & *(gray3)\,  \\
				*(gray3) \,  & *(gray2) \,  & *(gray2) \,  \\
				*(gray2) \,  & *(gray2) \,   \\
				*(gray1) \,   \\
				\end{ytableau}
				\end{align*}    
			\end{minipage}
			\hspace{-3em}
			\ytableausetup{centertableaux, boxsize=0.8em}
			\begin{minipage}[c]{0.5\linewidth}
				\begin{align*}
				\theta_4 &{}= (2)\slash(1) = \begin{ytableau} *(gray4) \, \end{ytableau}\\     \theta_3 &{}= (4,3,1)\slash (2) =  \begin{ytableau} \none &  \none& *(gray3) \, & *(gray3) \,  \\ *(gray3) \, & *(gray3) \,  & *(gray3) \, \\ *(gray3) \,  \end{ytableau}\\
				\theta_2 &{}= (3,3,3,2)\slash (3,3,1)  = \begin{ytableau} \none  & *(gray2) \, & *(gray2) \,  \\ *(gray2) \, & *(gray2)  \end{ytableau}   \\[1ex]         \theta_1 &{}= (1,1,1,1,1,1)\slash (1,1,1,1) = \begin{ytableau} *(gray1) \, \end{ytableau} 
				\end{align*}
			\end{minipage}
		\end{center}
	\end{example}
	
	\begin{definition}\label{def:consubrib}
		\begin{enumerate}[i)]
			\item For a skew diagram $\lsm$ we define its \emph{content} by
			\[c(\lsm) = \left\{j-i \mid (i,j) \in D(\lsm) \right\} \subset \Z  \,.\]
			\item We say a ribbon $R'$ is contained in another ribbon $R$, if there exist a $t \in \Z$ with
			\begin{align*}
			\big\{ (i+t,j+t) \mid (i,j) \in D(R') \big\} \subset D(R)\,.
			\end{align*}
			In this case we also call $R'$ a \emph{subribbon} of $R$. 
			\item Let $\Theta=(\theta_1,\dots,\theta_n)$ be an outside decomposition of an edge-connected skew diagram $\lsm$.  As in \cite{HG} we note that the $\theta_i$ nest correctly, so there exists a ribbon $R$, which contains all $\theta_1,\dots,\theta_n$ and which satisfies $c(R)=c(\lsm)$.
			This ribbon is unique up to diagonal translation and we will denote by $R_\Theta$ the one which is furthest left. (This is well-defined, since all coordinates in a diagram are positive.)
			\item Since $R_\Theta$  contains all $\theta_1,\dots,\theta_n$, we can define for $1 \leq i,j \leq n$ the subribbons $R_\Theta(i,j)$ of $R_\Theta$ by the property $c(R_\Theta(i,j)) = [\min c(\theta_i), \max c(\theta_j)]$ if $\min c(\theta_i)\leq \max c(\theta_i)$. In the case $\min c(\theta_i)=\max c(\theta_j)+1$ we set $R_\Theta(i,j)=\emptyset$ and in the cases $\min c(\theta_i)>\max c(\theta_j)+1$ the $R_\Theta(i,j)$ are undefined.
		\end{enumerate}
	\end{definition}
	
	\begin{example}    We again use the same skew diagram $\lsm = (4,3,3,2,1)\slash (1)$  and outside decomposition $\Theta= (\theta_1,\theta_2,\theta_3,\theta_4)$ as in Example \ref{ex:outdec}. In this case we have $c(\lsm) = \{-4,-3,\dots,2,3\}$ and
		{
			\belowdisplayskip=1em
			\belowdisplayshortskip=1em
			\[ R_\Theta = (6,6,6,5,3)\slash (6,6,4,2) =  \begin{ytableau} \none & \none &\none &\none & \, & \,\\ \none &\none & \, & \,& \,\\
			\, & \,& \,
			\end{ytableau}\,,\]
		}
		which contains the ribbons $\theta_1,\theta_2,\theta_3,\theta_4$ in the following way
		{
			\abovedisplayshortskip=1em\belowdisplayshortskip=1em
			\abovedisplayskip=1em\belowdisplayskip=1em
			\begin{equation*}
			\begin{ytableau} \none & \none &\none &\none & \, & \,\\ \none &\none & \, & \,& \,\\
			*(gray1)  & \,& \,
			\end{ytableau}\,,\quad
			\begin{ytableau} \none & \none &\none &\none & \, & \,\\ \none &\none &     *(gray2) \, &     *(gray2) \,& \,\\
			\, &    *(gray2)\, &     *(gray2)\, 
			\end{ytableau}\,,\quad
			\begin{ytableau} \none & \none &\none &\none &    *(gray3)  \, &     *(gray3) \,\\ \none &\none &     *(gray3) \, &     *(gray3) \,&     *(gray3) \,\\
			\, & \,&     *(gray3) \,
			\end{ytableau}\,,\quad
			\begin{ytableau} \none & \none &\none &\none & \, & \,\\ \none &\none & \, & \,&     *(gray4) \,\\
			\, & \,& \,
			\end{ytableau}\,.
			\end{equation*}}
		The subribbons $R_{\Theta}(i,j)$ can be read off from here, by choosing the starting box of $\theta_i$ and the ending box of $\theta_j$ in $R_{\Theta}$. We obtain
		{
			\abovedisplayshortskip=1em\belowdisplayshortskip=1em
			\abovedisplayskip=1em\belowdisplayskip=1em
			\begin{alignat*}{8}
			R_{\Theta}(1,1) &{}= \begin{ytableau} \, \end{ytableau}  =  \theta_1 \,, \quad &
			R_{\Theta}(1,2) &{}=  \begin{ytableau}  \none &\none & \, & \,\\\, & \,& \, \end{ytableau} \,\,,  \quad \quad  &
			R_{\Theta}(1,3) &{}=  R_{\Theta} \,, &
			R_{\Theta}(1,4) &{}=  \begin{ytableau}  \none &\none & \, & \,& \,\\\, & \,& \,\end{ytableau} \,\,,
			\\[1ex]
			R_{\Theta}(2,1) &{}=  \emptyset \,, \quad &
			R_{\Theta}(2,2) &{}= \begin{ytableau}  \none & \, & \,\\ \,& \, \end{ytableau} =  \theta_2 \,,&
			R_{\Theta}(2,3) &{}=  \begin{ytableau}\none & \none &\none & \,& \,\\ \none & \, & \,& \,\\ \,& \, \end{ytableau}\,\,, \quad &
			R_{\Theta}(2,4) &{}=  \begin{ytableau}  \none & \, & \,& \,\\\,& \,\end{ytableau}\,\,,
			\\[1ex]
			R_{\Theta}(3,1) &{}=  {\text{undef.}} \,, &
			R_{\Theta}(3,2) &{}=  \begin{ytableau}   \, & \,\\\,    \end{ytableau}\,\,,&
			R_{\Theta}(3,3) &{}= \begin{ytableau} \none &\none & \,& \,\\  \, & \,& \,\\ \,\end{ytableau}  = \theta_3 \,, \quad &
			R_{\Theta}(3,4) &{}=  \begin{ytableau} \, & \,& \,\\\,\end{ytableau}\,\,,
			\\[1ex]
			R_{\Theta}(4,1) &{}=  {\text{undef.}} \,,&
			R_{\Theta}(4,2) &{}=  \text{undef.} \,,&
			R_{\Theta}(4,3) &{}= \text{undef.} \,,&
			R_{\Theta}(4,4)  &{}= \begin{ytableau} \,\end{ytableau}  =  \theta_4\,.
			\end{alignat*}}
	\end{example}

	\begin{definition}
		\begin{enumerate}[i)]
			\item We denote by $T^{\rm{diag}}(\lsm)$ the set of all Young diagram $\kk=(k_{i,j})\in T(\lsm)$, which have constant entries on the diagonals, i.e. $k_{i,j} = k_{i^{\prime},j^{\prime}}$ whenever $j-i=j^{\prime}-i^{\prime}$.
			\item For an outside decomposition $\Theta=(\theta_1,\dots,\theta_n)$  of $\lsm$ and $\kk \in T^{\rm{diag}}(\lsm)$ we define the Young tableau $R^\kk_\Theta(i,j) = (r_{i,j})  \in T(R_\Theta(i,j))$  by $r_{i,j} = k_{i',j'}$ for some $(i',j')\in D(\lsm)$ with $j'-i' = i -j$. 
			(Notice that $(i,j)$ is not necessarily an element in $D(\lsm)$, but there has to be at least one $(i',j') \in D(\lsm)$ with the same content.)
		\end{enumerate}
	\end{definition}

	\begin{example} Again in the case $\lsm = (4,3,3,2,1)\slash (1)$ with the outside decomposition $\Theta= (\theta_1,\theta_2,\theta_3,\theta_4)$  of $\lsm$ as in Example \ref{ex:outdec}, we could choose 
		\ytableausetup{centertableaux, boxsize=1.1em}
		\begin{align*}
		\kk ={\footnotesize\begin{ytableau}
			\none & 2 & 1  & 5 \\
			6  & 3 &2\\
			4 & 6  & 3  \\
			7 & 4  \\
			3  \\
			\end{ytableau}}\, \in T^{\rm{diag}}(\lsm) \, .
		\end{align*}
		This would give, for example 
		\begin{align*}
		R^{\kk}_{\Theta}(1,2) =  
		{\footnotesize\begin{ytableau}  \none &\none & 6 & 3\\
			3 & 7 & 4
			\end{ytableau}}\,\quad \text{or} \quad 
		R^{\kk}_{\Theta}(3,3) = {\footnotesize\begin{ytableau} \none &\none & 1& 5\\  6 & 3& 2\\
			4
			\end{ytableau}}\,\,.
		\end{align*}
	\end{example}
	
	\begin{theorem} \label{thm:jacobitrudi} For an edge-connected skew diagram $\lsm$ with outside decomposition $\Theta=(\theta_1,\dots,\theta_n)$ and $\kk \in T^{\rm{diag}}(\lsm)$, we have for all $M \in \Z_{>0}$ 
		\begin{equation}
		\sfm(\kk) = \det\left( \sfm(R^\kk_\Theta(i,j))  \right)_{1 \leq i,j \leq n}\,,
		\end{equation}
		where we set $\sfm(R^\kk_\Theta(i,j)) =0$ if $R_\Theta(i,j)$ is undefined.
	\end{theorem}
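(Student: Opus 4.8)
The plan is to run the lattice-path proof of the Hamel--Goulden determinant formula \cite{HG}, the sole modification being that the weights on the lattice edges are allowed to depend on the content of the box they encode, exactly as the weights in \cite{NPY} were allowed to depend on horizontal position. Throughout, fix $\lsm$, the outside decomposition $\Theta=(\theta_1,\dots,\theta_n)$, and $\kk=(k_{i,j})\in T^{\rm{diag}}(\lsm)$; since $\kk$ is constant on diagonals we may write $\kk_d$ for the common value of $k_{i,j}$ over all $(i,j)\in D(\lsm)$ with $j-i=d$.

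First I would recall the planar acyclic digraph $G=G(\lsm,\Theta)$ constructed in \cite{HG}, together with its two defining properties. It has sources $u_1,\dots,u_n$ and sinks $v_1,\dots,v_n$, placed on the perimeter at the starting and ending boxes of the ribbons, and is arranged so that: (i) for $1\le i,j\le n$ the directed paths $P\colon u_i\to v_j$ in $G$ are in bijection with the fillings of the subribbon $R_\Theta(i,j)$ that are weakly increasing along rows and strictly increasing down columns, where the unique trivial (length-zero) path corresponds to the empty filling when $R_\Theta(i,j)=\emptyset$ and there is no such path at all when $R_\Theta(i,j)$ is undefined; and (ii) every family $(P_1,\dots,P_n)$ of pairwise non-intersecting directed paths with $P_l\colon u_l\to v_{\sigma(l)}$ has $\sigma=\mathrm{id}$, and such families are in bijection with $\SSYT(\lsm)$ via the box-by-box fillings recorded by the paths. (Property (ii) is the ``correct nesting'' of the $\theta_i$ already invoked in Definition \ref{def:consubrib}(iii).)

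Next I would install the weights. Any directed path in $G$ traverses the boxes of the corresponding (sub)ribbon in strictly increasing order of content, hitting each content in its interval exactly once (a ribbon with content set $[a,b]$ has precisely one box of content $a,a+1,\dots,b$). I therefore weight the edge of $G$ recording ``the box of content $d$ receives the integer $m$'' by $f(m,\kk_d)\in A$; this is well defined precisely because $\kk\in T^{\rm{diag}}(\lsm)$. To accommodate the truncation I discard every edge recording an integer $m\ge M$, so that paths live among the levels $1,\dots,M-1$; this can neither create path intersections nor alter path endpoints, so properties (i) and (ii) persist with $\SSYT$ replaced by $\SSYT_M$. Under this weighting the weight of a path $P\colon u_i\to v_j$ corresponding to a filling $(m_{i',j'})$ of $R_\Theta(i,j)$ is $\prod_{(i',j')}f(m_{i',j'},\kk_{j'-i'})=\prod_{(i',j')}f(m_{i',j'},r_{i',j'})$ with $(r_{i',j'})=R^\kk_\Theta(i,j)$, so summing over all such paths yields
\[
\sum_{P\colon u_i\to v_j}\mathrm{wt}(P)=\sfm\big(R^\kk_\Theta(i,j)\big),
\]
with the empty sum $=0$ when $R_\Theta(i,j)$ is undefined and the empty-path term giving $\sfm(\emptyset)=1$ when $R_\Theta(i,j)=\emptyset$; likewise the weight of a non-intersecting family equals $\prod_{(i,j)\in D(\lsm)}f(m_{i,j},k_{i,j})$ for the corresponding element of $\SSYT_M(\lsm)$.

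Finally I would apply the Lindström--Gessel--Viennot lemma \cite{GV}, which is a formal identity valid over any commutative ring: expanding the determinant and using (ii) to kill every permutation other than the identity gives
\begin{align*}
\det\Big(\sfm\big(R^\kk_\Theta(i,j)\big)\Big)_{1\le i,j\le n}
&=\sum_{(P_1,\dots,P_n)\ \text{non-int.}}\ \prod_{l=1}^n\mathrm{wt}(P_l)\\
&=\sum_{(m_{i,j})\in\SSYT_M(\lsm)}\ \prod_{(i,j)\in D(\lsm)}f(m_{i,j},k_{i,j})=\sfm(\kk).
\end{align*}
I expect the main obstacle to be purely organisational: transcribing the Hamel--Goulden graph and verifying that its combinatorial properties (i) and (ii) are unaffected both by the content-dependent relabelling of edge weights and by the deletion of the levels $\ge M$. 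The one genuinely new point is that the hypothesis $\kk\in T^{\rm{diag}}(\lsm)$ is exactly what makes the weight $f(m,\kk_d)$ depend only on the data available to an edge of $G$, and hence what makes the whole construction go through.
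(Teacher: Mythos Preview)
Your proposal is correct and follows essentially the same approach as the paper: both run the Hamel--Goulden lattice-path argument verbatim and modify only the edge weights so that a step recording ``filling $m$ in a box of content $d$'' is weighted $f(m,\kk_d)$ rather than $x_m$, relying on the diagonal-constancy of $\kk$ to make this well defined. The paper spells out the explicit coordinates of the endpoints $P_i,Q_i$ and the step rules from \cite{HG}, while you abstract these into properties (i) and (ii) and treat the truncation to $M$ more explicitly, but the substance is identical.
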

	\begin{proof}
		The proof is a variation of the proof of Theorem 3.1 in \cite{HG} for Schur functions, which we will now describe roughly and modify for our purpose.
		
		In  \cite{HG} the ribbons are called strips and the authors define for a outside decomposition $(\theta_1,\dots,\theta_n)$ a strip $\theta_i \# \theta_j$ (\cite[p. 465]{HG}). Since we are just considering the case of edge-connected skew diagrams $\lsm$, we do not need to consider the ``null strips'' used in \cite{HG}. In particular, Case III of the definition of $\theta_i \# \theta_j$ never appears. The ``same shape'' mentioned in the Case I is exactly given by our ribbon $R_\Theta$ and from the definition of the Cases I \& II one can see that we have $\theta_i \# \theta_j = R_\Theta(i,j)$.
		
		Following the proof in \cite{HG} for a given outside decomposition $(\theta_1,\dots,\theta_n)$ we construct an $n$-tuple of lattice paths which are in a $1:1$ correspondence with semi-standard Young tableaux of shape $\lsm$. The $i$-th path begins at $P_i$ and ends at $Q_i$ for $1 \leq i \leq n$. If $\theta_i$ has a starting box on the left perimeter in box $(s,t) \in D(\lsm)$ of the diagram we set $P_i = (t-s, 1)$ and if it has a starting box on the bottom perimeter  in box $(s,t)$ (but not the left) we set $P_i=(t-s,M-1)$. If the ribbon $\theta_i$ has ending box on the top perimeter in box $(u,v)$ we set $Q_i = (v-u+1,M-1)$ and if it has ending box on the right perimeter in box $(u,v)$ (but not the top) we set $Q_i = (v-u+1,1)$.
		
		For each semi-standard Young tableaux of shape $\lsm$ we obtain a unique $n$-tuple of paths from $P_i$ to $Q_i$ for $i=1,\dots,n$, which are allowed to have horizontal or down-diagonal steps to the right, and vertical steps up and down.  If a box in the Young tableau containing $j$ at coordinates $(a,b)$ in the diagram is approached from the left in the ribbon $\theta_i$, we put a horizontal step from $(b-a,j)$ to $(b-a+1,j)$. If a box containing $j$ in the Young tableaux at coordinates $(a,b)$ is approached from below in $\theta_i$, we put a diagonal step from $(b-a,j+1)$ to $(b-a+1,j)$.
		
		For example, in the case $\lsm =(4,3,3,2,1) \slash (1)$ we show in Figure \ref{fig:pathsystem} how a path system on the left corresponds to the semi-standard Young tableau on the right, with the outside decomposition $(\theta_1,\theta_2,\theta_3,\theta_4)$ given in Example \ref{ex:outdec} and $M=8$.
		\begin{figure}[H] 
			\begin{minipage}[c]{0.4\linewidth}
				\begin{tikzpicture}[scale=0.3]
				\def\rad{2.5 pt}
				\draw [line width=0.4mm,darkgray] (-8,2) -- (-8,4) -- (-8,6) -- (-8,8) -- (-8,10) -- (-8,12) -- (-6,12) -- (-6,14);
				\draw [line width=0.4mm,darkgray] (-6,2) -- (-6,4) -- (-6,6) -- (-6,8) -- (-6,10) -- (-4,10) -- (-2,10) -- (0,8) -- (2,8) -- (2,10)--(2,12)--(2,14);
				\draw [line width=0.4mm,darkgray] (-4,2) -- (-4,4) -- (-4,6) -- (-4,8) -- (-2,8) -- (-2,6) -- (0,4) -- (0,6) -- (2,6) -- (4,6)--(6,4)--(6,6)--(6,8)--(6,10)--(8,10)--(8,12)--(8,14);
				\draw [line width=0.4mm,darkgray] (2,2) -- (2,4) -- (4,4) -- (4,2);
				\foreach \x in {-4,...,4}
				{
					\foreach \y in {1,...,7}
					{
						\fill[black] (2*\x,2*\y) circle (\rad);    
					}    
				}
				\draw ($(-8,2)-(0,0.1)$) node[below] {$P_1$};
				\fill[black] (-8,2) circle (4pt);
				\draw ($(-6,2)-(0,0.1)$) node[below] {$P_2$};
				\fill[black] (-6,2) circle (4pt);
				\draw ($(-4,2)-(0,0.1)$) node[below] {$P_3$};
				\fill[black] (-4,2) circle (4pt);
				\draw ($(2,2)-(0,0.1)$) node[below] {$P_4$};
				\fill[black] (2,2) circle (4pt);
				\draw ($(-6,14)+(0,0.1)$) node[above] {$Q_1$};
				\fill[black] (-6,14) circle (4pt);
				\draw ($(2,14)+(0,0.1)$) node[above] {$Q_2$};
				\fill[black] (2,14) circle (4pt);
				\draw ($(8,14)+(0,0.1)$) node[above] {$Q_3$};
				\fill[black] (8,14) circle (4pt);
				\draw ($(4,2)-(0,0.1)$) node[below] {$Q_4$};
				\fill[black] (4,2) circle (4pt);
				e                \end{tikzpicture}
			\end{minipage}\hfil
			\begin{minipage}[c]{0.3\linewidth}
				\begin{align*}
				\ytableausetup{centertableaux, boxsize=1.8em}
				\begin{ytableau}
				\none & 2 & 2 & 5 \\
				2 & 3 & 3\\
				4 & 4 & 4 \\
				5 & 5\\
				6
				\end{ytableau}
				\end{align*}
			\end{minipage}
			\captionsetup{width=.9\linewidth}
			\caption{An example of a lattice path system and the corresponding semi-standard Young tableau of shape $(4,3,3,2,1) \slash (1)$.}\label{fig:pathsystem}
		\end{figure}
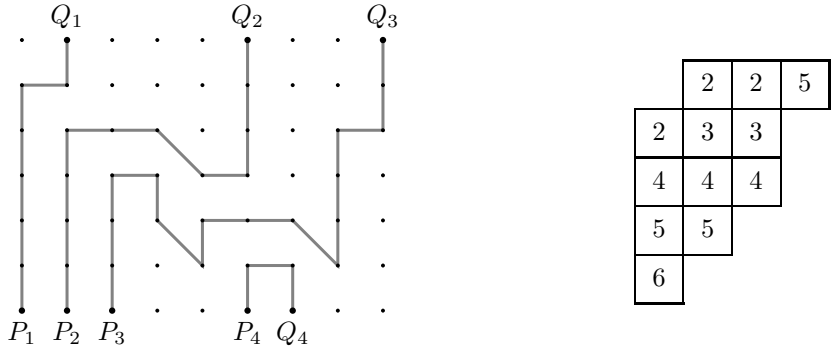
		
		A step ending in $(i,j)$ corresponds to a box in the semi-standard Young tableaux of content $i-1$ and filling $j$. The proof for the case of Schur functions in \cite{HG} then uses the usual Gessel-Viennot procedure and assigns to a horizontal or diagonal step ending in a point $(i,j)$ the weight $x_j$. To adapt this proof for the more general case of our functions $S_f^{M}$, we therefore just need to change the weight of a step ending in $(i,j)$ to be $f(j,i-1)$, which then proves the Theorem.
	\end{proof}
	
	\begin{remark} The authors would expect that there is also a version of Theorem \ref{ex:outdec} for interpolated variants of $\sfm$, which generalize interpolated Schur multiple zetas as defined in \cite{B}. This might be doable by using the construction of the $t$-lattice graph therein and a similar approach to the one described in the proof above. 
	\end{remark}
	
	In the application to Schur multiple zeta values we will usually not start with an outside decomposition, but instead, we will fix a ribbon $R_\theta$. Therefore the following corollary will be helpful later. 
	
	\begin{corollary}\label{cor:ribbonjacobi} For an edge-connected skew diagram $\lsm$, a ribbon $R$ with $c(R)=c(\lsm)$ and $\kk \in T^{\rm{diag}}(\lsm)$, the value $\sfm(\kk)$ can be written as a polynomial in $\sfm(\kk')$. Here the $\kk' \in T^{\rm{diag}}(R')$ are Young tableaux  where $R'$ are subribbons of $R$. 
	\end{corollary}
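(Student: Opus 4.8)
The plan is to realise $R$ as the cutting strip $R_\Theta$ of a suitably chosen outside decomposition $\Theta$ of $\lsm$ and then to invoke Theorem \ref{thm:jacobitrudi}. Since $R$ is a ribbon, reading its boxes in order of increasing content gives, between consecutive diagonals $d$ and $d+1$, either a horizontal step or a vertical step; record this sequence of steps. It is indexed exactly by the consecutive pairs of diagonals occurring in $\lsm$, because $c(R)=c(\lsm)$ and $c(\lsm)$ is a genuine interval of integers (here we use that $\lsm$ is edge-connected). I would then \emph{cut $\lsm$ along this profile}: for each box of $D(\lsm)$ follow the recorded step pattern forwards (increasing content) and backwards (decreasing content) as far as one stays inside $D(\lsm)$; the resulting maximal ribbons partition $D(\lsm)$ into pieces $\theta_1,\dots,\theta_n$.

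First I would check that $\Theta=(\theta_1,\dots,\theta_n)$ is indeed an outside decomposition. That the $\theta_i$ are ribbons and partition $\lsm$ is immediate from the construction. The point requiring care is that each $\theta_i$ has its starting box on the left or bottom perimeter and its ending box on the right or top perimeter: by maximality, extending $\theta_i$ one further step in either direction leaves $D(\lsm)$, and for a piece following the prescribed profile one checks that this forces the first box onto the southwest boundary and the last box onto the northeast boundary. This is precisely the cutting-strip construction underlying \cite{HG}, and by construction the ribbon $R_\Theta$ of Definition \ref{def:consubrib} iii) attached to $\Theta$ is a diagonal translate of $R$; consequently each $R_\Theta(i,j)$ is a subribbon of $R$ (or is empty, or is undefined).

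With such a $\Theta$ fixed, Theorem \ref{thm:jacobitrudi} yields
\[
\sfm(\kk) = \det\bigl( \sfm(R^\kk_\Theta(i,j)) \bigr)_{1 \leq i,j \leq n}\,.
\]
For an undefined $R_\Theta(i,j)$ the corresponding entry is $0$, and for $R_\Theta(i,j)=\emptyset$ it is $\sfm(\emptyset)=1$; every remaining entry is $\sfm(\kk')$ with $\kk'=R^\kk_\Theta(i,j) \in T^{\rm{diag}}(R_\Theta(i,j))$ for the subribbon $R'=R_\Theta(i,j)$ of $R$ (note that on a ribbon every diagonal meets at most one box, so the constant-diagonal condition is automatic, i.e.\ $T^{\rm{diag}}(R')=T(R')$). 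Expanding the determinant by the Leibniz formula then exhibits $\sfm(\kk)$ as a polynomial, with integer coefficients, in the values $\sfm(\kk')$ ranging over Young tableaux $\kk'$ of subribbons of $R$, which is the claim.

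I expect the only nontrivial step to be the verification in the second paragraph that the profile cut of $\lsm$ genuinely produces a legitimate outside decomposition with $R_\Theta=R$ — that is, the perimeter condition on the endpoints of the $\theta_i$. Everything else is a direct application of Theorem \ref{thm:jacobitrudi} together with the elementary observation that ribbon subdiagrams are automatically constant on diagonals.
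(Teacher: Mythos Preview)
Your proposal is correct and follows essentially the same route as the paper's proof: construct an outside decomposition $\Theta$ of $\lsm$ whose cutting strip is $R$, and then apply Theorem~\ref{thm:jacobitrudi}. The paper builds the $\theta_i$ one at a time, starting from a remaining box of smallest content and at each step moving in the direction (up or right) dictated by $R$ until one leaves $D(\lsm)$; your ``profile cut'' is just the global description of the same procedure, and the tessellation picture you invoke is exactly what the paper records in the remark immediately following the proof. You are also right that the only point needing a small argument is the perimeter condition on the endpoints of the $\theta_i$; the paper handles this (and the disjointness of the pieces) explicitly, so you should spell that check out rather than just asserting ``one checks''.
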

	\begin{proof}
		To prove this statement we need to construct an outside decomposition $\Theta$ with $R_{\Theta}=R$, which can be done in the following way. To construct $\theta_1$, let $(i,j) \in D(\lsm)$ the box of $R$ with smallest content $j-i$. We have $c(R)=c(\lsm)$ and therefore this box corresponds to the bottom left box of $\lsm$. Define this box to be the first box of $\theta_1$.  
		
		Since $R$ is a ribbon there is either a box on top of this box or one on the right. Assume there is a box on top: if there is also a box on top of the corresponding box in $\lsm$, choose this box to be the next box of $\theta_1$. If there is no box, $\theta_1$ is complete and it has a box starting on the bottom perimeter and ending box on the top perimeter of $\lsm$. Similarly, if there is a box on the right: If there is also a box on the right in $\lsm$ choose this box, otherwise $\theta_1$ is complete and has an ending box on the right perimeter of $\lsm$.
		
		After $\theta_1$ is complete, we can construct $\theta_2$ by choosing one of the remaining boxes with the smallest content and repeat the same construction as we did for $\theta_1$.   It is clear that in this process we will never choose a box that has already been removed: if we did, then the box of this content in the ribbon would have a neighbour below and to the left contrary to the definition of a ribbon.
		
		Once all boxes have been removed in this way, we obtain an outside decomposition $\Theta$ of $\lsm$, with $R_{\Theta}=R$. Since $R_\Theta(i,j)$ are all subribbons of $R$, the statement follows from Theorem \ref{thm:jacobitrudi}.
	\end{proof}
	
	\begin{remark}
		Another point of view to construct an outside decomposition out of a ribbon $R$, is to first `cover the plane' by copies of $R$. Since $c(\lsm) = R$ we can cover $\lsm$ completely by copies of $R$. Now the corresponding outside decomposition can be read off by considering the intersection of $\lsm$ with the copies of $R$. 
		
		Figure \ref{fig:routerdecomp} illustrates how one obtains an outside decomposition of $\lsm = (4,3,3,2,1)\slash (1)$ from the ribbon $R=(6,6,6,5,3)\slash (6,6,4,2)$, by tiling the plane in this way.
		\ytableausetup{centertableaux, boxsize=0.8em}
		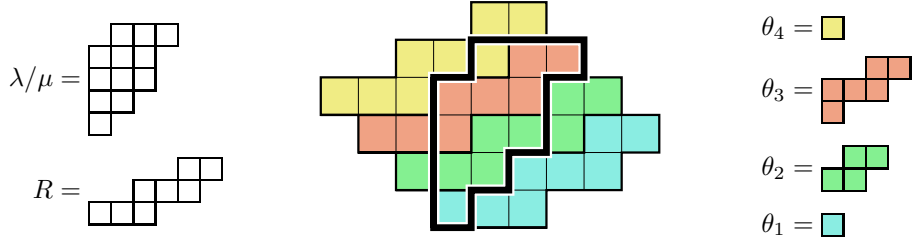
\begin{figure}[H]
			\begin{minipage}[c]{0.3\linewidth}
				\begin{align*}
				\lsm &{}={}\begin{ytableau}
				\none &\,  & \,  & \, \\
				\,  & \, & \,  \\
				\,  &\,  & \,  \\
				\,  &  \,   \\
				\,   \\
				\end{ytableau}\\[1ex]
				R&{}={}\begin{ytableau} \none & \none &\none &\none & \, & \,\\ \none &\none & \, & \,& \,\\
				\, & \,& \,
				\end{ytableau}
				\end{align*}
			\end{minipage}\hfil
			\begin{minipage}[c]{0.3\linewidth}
				\begin{tikzpicture}[scale=0.5]
				\def\rad{1.5 pt}
				\def\liff{1.5mm}        
				\def\lif{0.9mm}        
				\def\li{0.3mm}        
				\def\lis{0.1mm}        
				\foreach \t in {0,...,3}
				{
					\pgfmathsetmacro\tt{int(\t+1)}
					\path [fill,gray\tt] (3-\t,\t) -- (6-\t,0+\t) -- (6-\t,1+\t)--(8-\t,1+\t)--(8-\t,2+\t)--(9-\t,2+\t)--(9-\t,3+\t)--(7-\t,3+\t)--(7-\t,2+\t)--(5-\t,2+\t)--(5-\t,1+\t)--(3-\t,1+\t)--(3-\t,0+\t);        
					
					\draw [line width=\li,black] (3-\t,\t) -- (6-\t,0+\t) -- (6-\t,1+\t)--(8-\t,1+\t)--(8-\t,2+\t)--(9-\t,2+\t)--(9-\t,3+\t)--(7-\t,3+\t)--(7-\t,2+\t)--(5-\t,2+\t)--(5-\t,1+\t)--(3-\t,1+\t)--(3-\t,0+\t)-- (6-\t,0+\t);
					\draw [line width=\lis,black] (4-\t,0+\t) -- (4-\t,1+\t);        
					\draw [line width=\lis,black] (5-\t,0+\t) -- (5-\t,1+\t) -- (6-\t,1+\t)-- (6-\t,2+\t);        
					\draw [line width=\lis,black] (7-\t,1+\t) -- (7-\t,2+\t) -- (8-\t,2+\t)--(8-\t,3+\t);        
				}
				
				\draw [line width=\liff,white] (3,0)--(3,4)--(4,4)--(4,5)--(7,5)--(7,4)--(6,4)--(6,2)--(5,2)--(5,1)--(4,1)--(4,0)--(3,0)--(3,4);
				\draw [line width=\lif,black] (3,0)--(3,4)--(4,4)--(4,5)--(7,5)--(7,4)--(6,4)--(6,2)--(5,2)--(5,1)--(4,1)--(4,0)--(3,0)--(3,4);
				\end{tikzpicture}
			\end{minipage}
			\begin{minipage}[c]{0.3\linewidth}        
				\begin{align*}
				\theta_4 &{}={} \begin{ytableau} *(gray4) \, \end{ytableau}\\    
				\theta_3  &{}={}\begin{ytableau} \none &  \none& *(gray3) \, & *(gray3) \,  \\ *(gray3) \, & *(gray3) \,  & *(gray3) \, \\ *(gray3) \,  \end{ytableau} \\[1ex]        
				\theta_2 &{}={}\begin{ytableau} \none  & *(gray2) \, & *(gray2) \,  \\ *(gray2) \, & *(gray2)  \end{ytableau} \\[1ex]
				\theta_1 &{}={} \begin{ytableau} *(gray1) \, \end{ytableau}
				\end{align*}
			\end{minipage}
			\captionsetup{width=.9\linewidth}
			\caption{Starting with a skew diagram $\lsm = (4,3,3,2,1)\slash (1)$ and a ribbon $R=(6,6,6,5,3)\slash (6,6,4,2)$, we can cover the plane by $R$ and consider its intersections with $\lsm$. This gives an outside decomposition $\Theta=(\theta_1,\theta_2,\theta_3,\theta_4)$ with $R_\Theta=R$.}\label{fig:routerdecomp}
		\end{figure}
	\end{remark}
	
	\section{Jacobi-Trudi formula for regularized Schur multiple zeta values}\label{sec:smzv}
	
	We now recall the definition of Schur multiple zeta values, which were first defined in \cite{NPY}. For a Young tableau\footnote{The entries of the Young tableau could also be elements in $\C$ like in \cite{NPY}. We will just consider the case of integer entries.} $\kk = (k_{i,j}) \in T(\lambda \slash \mu)$  and an integer $M \in \Z_{>0}$ the truncated Schur multiple zeta value is defined by  
	\begin{equation}\label{eq:defschurmzv}
	\zeta_M(\kk) = \sum_{(m_{i,j}) \in \SSYT_M(\lambda \slash \mu)} \,\, \prod_{(i,j) \in D(\lambda \slash \mu)}  \frac{1}{m_{i,j}^{k_{i,j}}} \,. 
	\end{equation}
	The limit $M\rightarrow \infty$ of \eqref{eq:defschurmzv} exist, when $\kk$ is admissible. Being admissible means that $k_{i,j} \geq 2$ for all corners $(i,j)$, where $(i,j)\in D(\lambda\slash\mu)$ is called a corner of $\lambda \slash \mu$
	if $ (i,j+1) \not\in D(\lambda\slash\mu)$ and $ (i+1,j) \not\in D(\lambda\slash\mu)$, see \cite[Lemma~2.1]{NPY}. For an admissible $\kk$ the Schur multiple zeta value $ \zeta(\kk) $ is then defined by 
	\[ \zeta(\kk) = \lim_{M \rightarrow \infty} \zeta_M(\kk)\,.\]
	The numbers \eqref{eq:defschurmzv} generalize truncated multiple zeta and zeta-star values
	\begin{align*} 
	\zeta_M(k_1,\ldots,k_r)&{}=\sum_{0<m_1<\cdots<m_r<M} \frac{1}{m_1^{k_1}\cdots m_r^{k_r}} \,, \quad \zeta^\star_M(k_1,\ldots,k_r)&{}=\sum_{0<m_1\leq\cdots \leq m_r<M} \frac{1}{m_1^{k_1}\cdots m_r^{k_r}} \,,
	\end{align*}
	by choosing a column of length $r$ or row of length $r$ respectively. In these cases the only corner is the box containing $k_r$ and therefore the limit $M\rightarrow \infty$ exists in the case $k_r\geq 2$, which then gives the multiple zeta (star) values $\zeta(k_1,\dots,k_r)$ and $\zeta^\star(k_1,\dots,k_r)$ from \eqref{eq:mzv}.
	
	For any Young tableau $\kk$, the value $\zeta_M(\kk)$ can be written as a linear combination of truncated multiple zeta values by considering the so-called topological sorts (or rather a generalization thereof) of the poset given by the inequalities of the semi-standard Young tableau.  For example, we have
	\ytableausetup{centertableaux, boxsize=1.2em}
	\begin{align} \label{eq:schurasmzv}
	\begin{split}
	\zeta_M\left({\footnotesize
		\begin{ytableau}
		a & b \\ c
		\end{ytableau}}\right) &{}= \sum_{
		{\scriptsize\arraycolsep=1.4pt\def\arraystretch{0.8}
			\begin{array}{cccc}
			0<&m_a&\leq m_b &<M\\
			&\vsmall &  \\
			&m_c&< M
			\end{array} }} \frac{1}{m_a^a \cdot m_b^b \cdot m_c^c} = \sum_{\substack{\phantom{\text{or }} 0 < m_a < m_c < m_b < M\\ \text{or } 0 < m_a < m_c = m_b < M \\ \text{or } 0 < m_a < m_b < m_c < M \\ \text{or } 0 < m_a = m_b < m_c < M}} \frac{1}{m_a^a \cdot m_b^b \cdot m_c^c} \\
	&= \zeta_M(a,c,b)+\zeta_M(a,b+c) + \zeta_M(a,b,c)+\zeta_M(a+b,c)\,.
	\end{split}
	\end{align}
	
	The product of two multiple zeta values can be expressed by using the so called harmonic product formula, which in the lowest depth is given by $\zeta_M(a) \zeta_M(b) = \zeta_M(a,b) + \zeta_M(b,a) + \zeta_M(a+b)$. Using the notion of Schur multiple zeta values, this can be expressed even more nicely as
	\[\zeta_M\left(\footnotesize \ytableausetup{centertableaux, boxsize=1.2em}
	\begin{ytableau}
	a
	\end{ytableau} \right) \zeta_M\left({\footnotesize \ytableausetup{centertableaux, boxsize=1.2em}
		\begin{ytableau}
		b
		\end{ytableau}}\right)  = \zeta_M\left( {\footnotesize \ytableausetup{centertableaux, boxsize=1.2em}
		\begin{ytableau}
		a & b
		\end{ytableau}}\right) + \zeta_M\left({\footnotesize \ytableausetup{centertableaux, boxsize=1.2em}
		\begin{ytableau}
		b\\a
		\end{ytableau}}\right)  \,.\]
	See \cite{BY} for a precise definition of the harmonic product of Schur multiple zeta values for general Young tableaux. In \cite{IKZ} the authors introduce (harmonic) regularized\footnote{In their work Ihara-Kaneko-Zagier define two regularized versions (harmonic and shuffle) of multiple zeta values, denoted by $Z^\ast_{k_1,\dots,k_r}(T) \in \R[T]$ and $Z^\shuffle_{k_1,\dots,k_r}(T)\in \R[T]$. We just consider the harmonic version and write $\zr(k_1,\dots,k_r ; T) = Z^\ast_{k_1,\dots,k_r}(T)$.} multiple zeta values $\zr(k_1,\dots,k_r; T)\in \R[T]$, which are defined for all $k_1,\dots,k_r \in \Z_{>0}$. These coincide with the classical multiple zeta values in the case $k_r \geq 2$ and they satisfy the harmonic product formula for all indices. We generalize this notion to Schur multiple zeta values in the following way.
	
	\begin{lemma} \label{lem:schurmzvasym}
		For any Young tableau $\kk  \in T(\lambda \slash \mu)$ there exist a unique polynomial $\zr(\kk ; T)\in \R[T]$, such that 
		\begin{align*}
		\zeta_M(\kk) = \zr(\kk ; \log(M) + \gamma ) +O\left(\frac{\log^J(M)}{M}\right)
		\end{align*}
		as $M\rightarrow \infty$. Here $\gamma$ denotes the Euler–Mascheroni  constant and $J$ is an integer depending on $\kk$.
	\end{lemma}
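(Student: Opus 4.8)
The plan is to reduce the statement to the case of (truncated) multiple zeta values and then to recover, on the level of asymptotics, the Ihara--Kaneko--Zagier harmonic regularization. As recalled just before the statement (cf.\ \eqref{eq:schurasmzv}), for any $\kk\in T(\lsm)$ one may partition $\SSYT_M(\lsm)$ according to the generalized topological sorts of the poset attached to the semi-standard condition, which expresses $\zeta_M(\kk)$ \emph{exactly} as a finite $\Z$-linear combination $\zeta_M(\kk)=\sum_\sigma c_\sigma\,\zeta_M(\kk_\sigma)$, where each $\kk_\sigma=(k^\sigma_1,\dots,k^\sigma_{r_\sigma})$ is a column (a linear index) whose entries are sums of pairwise disjoint nonempty subsets of the entries of $\kk$; in particular every $k^\sigma_i\geq1$. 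So it suffices to treat a single column $(k_1,\dots,k_r)$ with all $k_i\geq1$; having done so one sets $\zr(\kk;T):=\sum_\sigma c_\sigma\,\zr(k^\sigma_1,\dots,k^\sigma_{r_\sigma};T)$ and lets $J$ be the largest of the finitely many exponents that occur.

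For a single column I would induct on the number $s$ of trailing entries equal to $1$. When $s=0$, i.e.\ $k_r\geq2$, the difference $\zeta(k_1,\dots,k_r)-\zeta_M(k_1,\dots,k_r)$ is the tail over $m_r\geq M$, which is bounded using $\zeta_n(1,\dots,1)=O(\log^{r-1}n)$ and $\sum_{m\geq M}m^{-k_r}\log^{r-1}m=O(M^{-1}\log^{r-1}M)$; hence $\zeta_M(k_1,\dots,k_r)=\zeta(k_1,\dots,k_r)+O(\log^{r-1}M/M)$ and the polynomial is the constant $\zeta(k_1,\dots,k_r)$. When $s\geq1$, write the index as $(k_1,\dots,k_{r-1},1)$ and use the \emph{exact} stuffle identity for truncated sums
\[
\zeta_M(1)\,\zeta_M(k_1,\dots,k_{r-1})=\zeta_M(k_1,\dots,k_{r-1},1)+\big(\text{terms of strictly smaller depth or with strictly fewer trailing }1\text{'s}\big),
\]
obtained by merging or interleaving the extra $1$ into $(k_1,\dots,k_{r-1})$. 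Solving for $\zeta_M(k_1,\dots,k_{r-1},1)$: all other terms and $\zeta_M(k_1,\dots,k_{r-1})$ obey the desired asymptotics by the induction hypothesis, while $\zeta_M(1)=H_{M-1}=\log M+\gamma+O(1/M)$ with $H_{M-1}=\sum_{m=1}^{M-1}m^{-1}$; expanding the products and collecting powers of $\log M+\gamma$ produces $\zr(k_1,\dots,k_{r-1},1;\log M+\gamma)+O(\log^{J}M/M)$ with $J$ bounded in terms of the weight. Equivalently, this is precisely the Ihara--Kaneko--Zagier regularization: $w\mapsto\zeta_M(w)$ is a homomorphism for the harmonic product of truncated sums, $\h^1\cong\h^0[y]$ as harmonic algebras, and one applies $\zeta_M$ to the finite expansion $w=\sum_j w^{(j)}y^j$ with $w^{(j)}\in\h^0$ admissible, using $\zeta_M(y)=\log M+\gamma+O(1/M)$ and $\zeta_M(w^{(j)})=\zeta(w^{(j)})+O(\log^{N}M/M)$.

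Uniqueness is immediate: if $P,Q\in\R[T]$ both satisfy $\zeta_M(\kk)=P(\log M+\gamma)+O(\log^{J}M/M)=Q(\log M+\gamma)+O(\log^{J}M/M)$ then $(P-Q)(\log M+\gamma)=O(\log^{J}M/M)\to0$ as $M\to\infty$, and since $\log M+\gamma\to\infty$ and a nonzero real polynomial does not tend to $0$ at infinity, $P=Q$.

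I expect the main obstacle to be the bookkeeping in the non-admissible inductive step: verifying that each stuffle decomposition really does isolate $\zeta_M(k_1,\dots,k_{r-1},1)$ with every remaining term strictly earlier in the chosen induction order, and that repeatedly multiplying by $\zeta_M(1)\sim\log M$ keeps the error of the uniform shape $O(\log^{J}M/M)$ for one fixed $J$ depending only on $\kk$. By contrast, the reduction to columns (already recalled in the text) and the uniqueness argument are routine.
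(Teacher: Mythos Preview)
Your proof follows the same route as the paper's: decompose $\zeta_M(\kk)$ into truncated multiple zeta values via generalized topological sorts and then appeal to the Ihara--Kaneko--Zagier asymptotic $\zeta_M(k_1,\dots,k_r)=\zr(k_1,\dots,k_r;\log M+\gamma)+O(\log^{J'}\!M/M)$; the paper simply cites \cite{IKZ} at that point, while you sketch its proof and also supply the uniqueness argument the paper omits. One small correction to your inductive step: when the index has $s\geq 2$ trailing $1$'s, the stuffle of $\zeta_M(1)$ with $\zeta_M(k_1,\dots,k_{r-1})$ produces $s$ identical copies of $\zeta_M(k_1,\dots,k_{r-1},1)$ (inserting the new $1$ anywhere in or adjacent to the trailing block of $s-1$ ones gives the same index), so you must divide by $s$ to isolate it---after which all remaining terms are indeed of strictly smaller depth or have strictly fewer trailing $1$'s, and your alternative $\h^1\cong\h^0[y]$ formulation sidesteps this bookkeeping entirely.
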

	\begin{proof} As mentioned before, the value $\zeta_M(\kk)$ can be written as a linear combination of the truncated multiple zeta values, by using the same idea as in \eqref{eq:schurasmzv}.
		By \cite{IKZ} these satisfy the similar asymptotic formula
		\[ \zeta_M(k_1,\dots,k_r) = \zr(k_1,\dots,k_r; \log(M) + \gamma) + O\left(\frac{\log^{J'}(M)}{M}\right)\]
		for some integer $J'$, from which the result follows. 
	\end{proof}

	\begin{remark}
		Even though $\zr(\kk ; T)$ depends on $T$, we will omit $T$ from our notation and just write $\zr(\kk)$ in the following. In the literature of classical multiple zeta values people typically refer to  $\zr(k_1,\dots,k_r;0)\in \R$  as the regularized multiple zeta values.  In the following we do not specialize $T$ and always view $\zr(\kk)\in \R[T]$  as a polynomial.
	\end{remark}
	
	For any Young tableau $\kk=(k_{i,j})\in T(\lsm)$ it is easy to see that for $f(m,d) = m^{-d}$ and all $M\in \Z_{>0}$, we have
	\begin{equation}\label{eq:zetaass}
	\zeta_M(\kk) =  \sfm(\kk) \,. 
	\end{equation}
	Using Theorem \ref{thm:jacobitrudi}  in the case of Young tableaux with constant diagonal entries we therefore obtain the following.
	
	\begin{theorem} \label{thm:jacobiformzv} For an edge-connected skew diagram $\lsm$ with outside decomposition $\Theta=(\theta_1,\dots,\theta_n)$ and $\kk \in T^{\rm{diag}}(\lsm)$, we have 
		\begin{equation}
		\zr(\kk) = \det\left( \zr(R^\kk_\Theta(i,j))  \right)_{1 \leq i,j \leq n}\,,
		\end{equation}
		where we set $\zr(R^\kk_\Theta(i,j)) =0$ if $R_\Theta(i,j)$ is undefined.
	\end{theorem}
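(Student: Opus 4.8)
The plan is to reduce Theorem~\ref{thm:jacobiformzv} to Theorem~\ref{thm:jacobitrudi} via the specialization \eqref{eq:zetaass} together with the asymptotic characterization of $\zr$ from Lemma~\ref{lem:schurmzvasym}. The starting observation is that both sides of the desired identity are, after this specialization, \emph{polynomial identities in $\zeta_M(\cdot)$ that hold for every} $M\in\Z_{>0}$: indeed, taking $A=\R$ and $f(m,d)=m^{-d}$, Theorem~\ref{thm:jacobitrudi} gives
\[
\zeta_M(\kk)=\sfm(\kk)=\det\bigl(\sfm(R^\kk_\Theta(i,j))\bigr)_{1\le i,j\le n}=\det\bigl(\zeta_M(R^\kk_\Theta(i,j))\bigr)_{1\le i,j\le n},
\]
with the convention that the $(i,j)$ entry is $0$ whenever $R_\Theta(i,j)$ is undefined. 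So the content of the theorem is purely that this determinantal identity survives passage to the $M\to\infty$ asymptotics encoded by $\zr$.

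The key step is therefore the following: expand the determinant on the right as a signed sum over permutations, $\det(\zeta_M(R^\kk_\Theta(i,j)))=\sum_{\sigma\in S_n}\operatorname{sgn}(\sigma)\prod_{i=1}^n \zeta_M(R^\kk_\Theta(i,\sigma(i)))$. For each $i$, by Lemma~\ref{lem:schurmzvasym} applied to the Young tableau $R^\kk_\Theta(i,\sigma(i))$ (which lies in $T(R_\Theta(i,\sigma(i)))$ when it is defined), we have $\zeta_M(R^\kk_\Theta(i,\sigma(i)))=\zr(R^\kk_\Theta(i,\sigma(i)))\big|_{T=\log M+\gamma}+O(\log^{J}(M)/M)$ for a suitable $J$; when $R_\Theta(i,\sigma(i))$ is undefined the corresponding factor is identically $0$ and contributes nothing. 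Multiplying these out over $i=1,\dots,n$, the error terms are absorbed because $\log^J(M)/M\to 0$ and the main terms are polynomials in $\log M+\gamma$ which stay bounded on any fixed... — more precisely, $\log^{a}(M)\cdot O(\log^{b}(M)/M)=O(\log^{a+b}(M)/M)$ — so we obtain
\[
\det\bigl(\zeta_M(R^\kk_\Theta(i,j))\bigr)=\det\bigl(\zr(R^\kk_\Theta(i,j))\bigr)\Big|_{T=\log M+\gamma}+O\!\left(\frac{\log^{J''}(M)}{M}\right)
\]
for some integer $J''$. On the other hand, by Lemma~\ref{lem:schurmzvasym} again, $\zeta_M(\kk)=\zr(\kk)\big|_{T=\log M+\gamma}+O(\log^{J'''}(M)/M)$. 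Combining the displayed $M$-identity with these two asymptotics, the polynomials $\zr(\kk)$ and $\det(\zr(R^\kk_\Theta(i,j)))$ in $\R[T]$ take asymptotically equal values at $T=\log M+\gamma$ as $M\to\infty$, i.e.\ their difference $P(T)\in\R[T]$ satisfies $P(\log M+\gamma)=O(\log^{J''''}(M)/M)\to 0$. Since $\log M+\gamma\to\infty$, a nonzero polynomial cannot tend to $0$ along this sequence, hence $P=0$ and the two polynomials coincide in $\R[T]$.

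I do not expect a serious obstacle here; the only point requiring a little care is the bookkeeping of the error terms when multiplying $n$ asymptotic expansions and then summing over $S_n$ — one must check that a product of $n$ terms, each of the form $\zr(\cdot)|_{T=\log M+\gamma}+O(\log^{J}(M)/M)$ with $\zr(\cdot)$ of bounded degree, expands to (a polynomial in $\log M+\gamma$) plus $O(\log^{J''}(M)/M)$, which is immediate from $\log^{a}(M)/M\to 0$ and closure of the error class under addition and under multiplication by powers of $\log M$. A second, even more minor point is to confirm that the $0$-entries arising from undefined $R_\Theta(i,j)$ are handled consistently on both sides, which they are by the stated convention in Theorem~\ref{thm:jacobitrudi} and the fact that a vanishing factor kills the whole permutation term. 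Everything else is a direct transfer of Theorem~\ref{thm:jacobitrudi} through \eqref{eq:zetaass} and Lemma~\ref{lem:schurmzvasym}.
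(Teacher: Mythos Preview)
Your proof is correct and follows essentially the same approach as the paper: specialize Theorem~\ref{thm:jacobitrudi} via \eqref{eq:zetaass} to obtain the truncated identity $\zeta_M(\kk)=\det(\zeta_M(R^\kk_\Theta(i,j)))$ for all $M$, then apply Lemma~\ref{lem:schurmzvasym} to both sides and compare asymptotics as $M\to\infty$. The paper states this in two sentences, leaving the error-term bookkeeping implicit, whereas you have spelled it out in detail.
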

	\begin{proof}
		By \eqref{eq:zetaass} and  Theorem \ref{thm:jacobitrudi} we get for all $M\in \Z_{>0}$
		\[     \zeta_M(\kk) = \det\left( \zeta_M(R^\kk_\Theta(i,j))  \right)_{1 \leq i,j \leq n}\,.\]
		The result follows from Lemma \ref{lem:schurmzvasym} by comparing the asymptotics of both sides as $M \rightarrow \infty$. 
	\end{proof}
	Theorem \ref{thm:jacobiformzv} generalizes the Jacobi-Trudi type formulae proven in \cite{NPY}. Notice that even when $\kk$ is admissible, the $R^\kk_\Theta(i,j)$ are not necessarily all admissible and therefore in the matrix regularization is necessary.  The terms involving $T$ will nevertheless cancel out in the determinant in these cases. Clearly, we also have an analogue of Corollary \ref{cor:ribbonjacobi}, which will be the more interesting point of view in our application below. 
	\begin{corollary}\label{cor:ribbonjacobimzv}
		For an edge-connected skew diagram $\lsm$ and a ribbon $R$ with $c(R)=c(\lsm)$ and $\kk \in T^{\rm{diag}}(\lsm)$, the $\zr(\kk)$ can be written as a polynomial in $\zr(\kk'; T)$. Here the $\kk' \in T^{\rm{diag}}(R')$ are Young tableaux with the $R'$ being subribbons of $R$.
	\end{corollary}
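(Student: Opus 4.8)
The plan is to obtain this corollary from Theorem \ref{thm:jacobiformzv} in precisely the same way that Corollary \ref{cor:ribbonjacobi} was obtained from Theorem \ref{thm:jacobitrudi}. The substantive work has already been done: Theorem \ref{thm:jacobiformzv} provides the determinant identity for $\zr$, and the proof of Corollary \ref{cor:ribbonjacobi} provides a recipe for turning a prescribed ribbon $R$ into an outside decomposition. So the only task is to bolt these two together and check that the resulting determinant is visibly a polynomial in the claimed quantities.

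Concretely, I would argue as follows. First, given the ribbon $R$ with $c(R)=c(\lsm)$, invoke the construction from the proof of Corollary \ref{cor:ribbonjacobi} verbatim to produce an outside decomposition $\Theta=(\theta_1,\dots,\theta_n)$ of $\lsm$ with $R_\Theta = R$; the verification that this sweep over contents (starting each new ribbon at a remaining box of minimal content) terminates with a genuine outside decomposition satisfying $R_\Theta = R$ is identical to the one already given and need not be repeated. Next, apply Theorem \ref{thm:jacobiformzv} to $\lsm$, $\Theta$ and $\kk\in T^{\rm{diag}}(\lsm)$, obtaining
\[
\zr(\kk) = \det\big( \zr(R^\kk_\Theta(i,j)) \big)_{1\le i,j\le n},
\]
with $\zr(R^\kk_\Theta(i,j))=0$ whenever $R_\Theta(i,j)$ is undefined. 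Finally, expand this determinant via the Leibniz formula: each nonzero, nonempty $R_\Theta(i,j)$ is a subribbon of $R_\Theta=R$ by Definition \ref{def:consubrib} iv), each $R^\kk_\Theta(i,j)$ lies in $T^{\rm{diag}}(R_\Theta(i,j))$ by construction, undefined entries contribute the zero polynomial, and the empty ribbon contributes $\zr(\emptyset)=1$; hence the determinant exhibits $\zr(\kk)$ as an integer polynomial in the finitely many values $\zr(\kk')$ with $\kk'\in T^{\rm{diag}}(R')$ and $R'$ a subribbon of $R$.

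I do not expect a genuine obstacle here, since this is a formal consequence of results already in hand. The one point deserving a sentence of care is that the subribbon tableaux $R^\kk_\Theta(i,j)$ are well-defined: reading off the entries of $\kk$ along diagonals requires that every content occurring in $R_\Theta(i,j)$ already occurs in $\lsm$, which holds because $c(R_\Theta(i,j))\subseteq c(R)=c(\lsm)$ — and this is exactly where the hypothesis $c(R)=c(\lsm)$ is used.
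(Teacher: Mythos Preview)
Your proposal is correct and matches the paper's approach exactly: the paper does not spell out a proof but simply remarks that Corollary \ref{cor:ribbonjacobimzv} is the direct analogue of Corollary \ref{cor:ribbonjacobi}, and your argument is precisely that analogue---reuse the outside-decomposition construction from the proof of Corollary \ref{cor:ribbonjacobi} and then apply Theorem \ref{thm:jacobiformzv} in place of Theorem \ref{thm:jacobitrudi}. Your added remarks about why the determinant entries are polynomials in the claimed quantities, and where the hypothesis $c(R)=c(\lsm)$ enters, make the implicit reasoning explicit but do not depart from the intended route.
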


	\section{Checkerboard style Schur multiple zeta values }
	In \cite{BY} the authors evaluated special type of Schur multiple zeta values which have alternating entries in $a,b \geq 1$. For example the Schur multiple zeta value
	\begin{align*}
	\zr\left(\ {\footnotesize
		\begin{ytableau}
		\none & \none & \none & a & *(gray)b\\
		\none & *(gray)b & a & *(gray)b \\
		*(gray)b & a & *(gray)b
		\end{ytableau}} \ \right) 
	\end{align*}
	is of this type.  Here the coloring is just for aesthetic purposes. We refer to these kind of Schur multiple zeta values as \emph{Checkerboard style Schur multiple zeta values}. 
	We first recall some notation introduced in \cite{BY}. For $n\geq 0$ define the following four type of Checkerboard style Schur multiple zeta values
	\begin{alignat*}{4} 
	\ytableausetup{mathmode,boxsize=1.1em,aligntableaux=center} 
	A_{a,b}(n)
	&\coloneqq
	\zr
	\left(
	\ {\footnotesize
		\begin{ytableau}
		\none & \none    & \none  & a        \\
		\none & \none    & \adots & *(gray)b \\
		\none & a        & \adots \\
		a     & *(gray)b  
		\end{ytableau}}
	\ \right)
	\,,\qquad &
	B_{a,b}(n)
	&\coloneqq
	\zr\left(
	\ {\footnotesize
		\begin{ytableau}
		\none  & \none & a & *(gray)b \\
		\none  & \adots& *(gray)b \\
		a & \adots \\
		*(gray)b
		\end{ytableau}}
	\
	\right)\,,\\
	S_{a,b}(n)&\coloneqq
	\zr
	\left(
	\ {\footnotesize
		\begin{ytableau}
		\none    & \none  & a        \\
		\none    & \adots & *(gray)b \\
		a        & \adots \\
		*(gray)b 
		\end{ytableau}}
	\ \right) \,,\qquad&
	S_{a,b}^{\star}(n)&\coloneqq
	\zr
	\left(
	\ {\footnotesize
		\begin{ytableau}
		\none & \none  & a      & *(gray)b \\
		\none & \adots & \adots \\
		a     & *(gray)b 
		\end{ytableau}}
	\ \right)\,.
	\end{alignat*}
	Here $n$ denotes the number of pairs of $a$ and $b$ and in the case $n=0$ we interpret above Schur multiple zeta values as  $A_{a,b}(0)=\zr(a)$, $B_{a,b}(0)=\zr(b)$ and $S_{a,b}(0)=S^\star_{a,b}(0)=1$. 
	In the case when $(a,b)=(1,3)$ the above checkerboard style Schur multiple zeta values can be evaluated explicitly. 
	\begin{theorem}[{\cite[Theorem 3.4 \& 3.5]{BY}}]
		\label{thm:SSstar}
		For $n\ge 1$ we have  
		\begin{align*}
		\ytableausetup{mathmode,boxsize=1.2em,aligntableaux=center} 
		S_{1,3}(n)
		&{}=\zeta
		\left(\:\,\:{\footnotesize
			\ 
			\begin{ytableau}
			\none    & \none     & 1        \\
			\none    & \adots    & *(gray)3 \\
			1        & \adots \\
			*(gray)3 
			\end{ytableau}\:\:}
		\ \right) 
		= \frac{1}{4^n} \zeta^\star(\{4\}^n) \,,\\
		S^\star_{1,3}(n)
		&=\zeta
		\left({\footnotesize
			\ 
			\begin{ytableau}
			\none        & \none  & 1      & *(gray)3 \\
			\none        & \adots & \adots \\
			1 & *(gray)3 
			\end{ytableau}}
		\ \right)
		= \sum_{k=0}^n \frac{1}{4^k} \zeta^\star(\{4\}^k) \zeta(\{4\}^{n-k})
		\,,\\
		A_{1,3}(n)
		&{}=\zeta
		\left(
		\ {\footnotesize
			\begin{ytableau}
			\none & \none    & \none  & 1        \\
			\none & \none    & \adots & *(gray)3 \\
			\none & 1        & \adots \\
			1     & *(gray)3  
			\end{ytableau}}
		\ \right)
		=\frac{2}{4^n} \zeta(4n+1)\,,\\
		B_{1,3}(n)
		&=\zeta
		\left(
		\ {\footnotesize
			\begin{ytableau}
			\none   & \none  & 1        & *(gray)3 \\
			\none   & \adots & *(gray)3 \\
			1       & \adots \\
			*(gray)3
			\end{ytableau}}
		\ \right) 
		=\frac{1}{4^{n}} \zeta(4n+3)\,.
		\end{align*} 
	\end{theorem}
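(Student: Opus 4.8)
This is \cite[Theorems~3.4 and~3.5]{BY}. The plan is to reprove it using the determinant formula of Corollary~\ref{cor:ribbonjacobimzv}. The shapes underlying $S_{1,3}(n)$, $S^\star_{1,3}(n)$, $A_{1,3}(n)$ and $B_{1,3}(n)$ are all edge-connected ``staircase'' ribbons, i.e.\ skew diagrams whose steps alternate between going up and going to the right; the content is an interval of length $2n$ in the first two cases and of length $2n+1$ in the last two. First I would take the ribbon $R$ of Corollary~\ref{cor:ribbonjacobimzv} to be a staircase of the appropriate length and choose a non-trivial outside decomposition of the shape in question (for instance into its rows, or into its anti-diagonals), so that each of the four values becomes an $n\times n$ determinant whose entries are regularized Schur multiple zeta values of subribbons of $R$. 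Since every subribbon of a staircase is again a shorter staircase, and a staircase filled alternately with $1$'s and $3$'s is, according to its length and to whether each end is a $1$-box or a $3$-box, exactly one of the shapes $S_{1,3}(k)$, $S^\star_{1,3}(k)$, $A_{1,3}(k)$ or $B_{1,3}(k)$ with $k<n$ --- or a single box contributing $\zr(1)$ or $\zeta(3)$ --- expanding the determinants along the first row and column produces a coupled system of linear recursions for the four families.

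I would then solve this system by induction on $n$. The cases $n=0$ are given, and for $n=1$ one has $S_{1,3}(1)=\zeta(1,3)=\tfrac{\pi^4}{360}=\tfrac14\zeta(4)$, $S^\star_{1,3}(1)=\zeta^\star(1,3)=\zeta(1,3)+\zeta(4)$, $A_{1,3}(1)=\tfrac12\zeta(5)$ and $B_{1,3}(1)=\tfrac14\zeta(7)$, all of which follow from the standard evaluations of length-two and hook-shaped Schur multiple zeta values together with the classical value of $\zeta(1,3)$. For the inductive step it is cleanest to pass to the generating functions $\mathsf S(t)=\sum_{n\ge 0}S_{1,3}(n)t^{4n}$, and analogously for $S^\star$, $A$, $B$, turning the recursions into functional equations relating them. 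The arithmetic inputs are the classical generating function $\sum_{n\ge0}\zeta(\{4\}^n)(-t^4)^n=\tfrac{\sin(\pi t)\sinh(\pi t)}{(\pi t)^2}$, whose reciprocal produces $\sum_{n\ge0}\zeta^\star(\{4\}^n)t^{4n}$, together with the digamma expansion $\sum_{m\ge1}\zeta(m+1)t^m=-\gamma-\psi(1-t)$, whose quadrisection yields $\sum_{n\ge1}\zeta(4n+1)t^{4n}$ and $\sum_{n\ge1}\zeta(4n+3)t^{4n}$. Substituting the claimed right-hand sides into the functional equations then reduces to a routine identity of power series, which closes the induction.

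I expect the main obstacle to be two-fold. The first, bookkeeping, difficulty is to determine precisely which subribbons occur, and with which signs, in the determinant attached to each of the four shapes: the answer is sensitive to the parity of the length of a subribbon and to whether it begins and ends on a $1$-box or on a $3$-box, and these end effects are exactly what fixes the coefficients of the recursions. The more substantive point is that the collapse of $A_{1,3}(n)$ and $B_{1,3}(n)$ to \emph{single} zeta values is not a formal consequence of the determinant expansion: a priori that expansion produces products of lower odd zeta values and powers of $\pi^4$, and it is genuine arithmetic --- essentially the Broadhurst--Zagier evaluation $\zeta(\{3,1\}^n)=\tfrac1{2n+1}\zeta(\{2\}^{2n})$ and the depth reduction it encodes --- that makes all of these cross terms cancel. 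Once the recursions and these inputs are in place, the rest is a mechanical manipulation of power series.
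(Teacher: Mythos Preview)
The paper does not prove this result at all: it is quoted verbatim from \cite[Theorems~3.4 and~3.5]{BY} and then used as a black-box input for the applications in Theorem~\ref{thm:13zeta}, Corollary~\ref{cor:13tessel}, Lemma~\ref{lem:stairstareval}, and the final Proposition. So there is no argument in the paper to compare your sketch against.

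That said, your plan to reprove it via Corollary~\ref{cor:ribbonjacobimzv} has a structural gap. The four shapes underlying $S_{1,3}(n)$, $S^\star_{1,3}(n)$, $A_{1,3}(n)$, $B_{1,3}(n)$ are themselves ribbons. In Theorem~\ref{thm:jacobiformzv} the ribbon $R_\Theta$ is \emph{determined} by the outside decomposition, not chosen independently; for a ribbon-shaped $\lambda/\mu$ every outside decomposition has $R_\Theta$ equal (up to diagonal translation) to $\lambda/\mu$ itself. You therefore cannot ``take $R$ to be a staircase'' \emph{and} separately ``choose a decomposition into its rows or anti-diagonals'' as two independent inputs. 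What one actually obtains from Theorem~\ref{thm:jacobiformzv} when a ribbon is cut into $n$ consecutive pieces is an upper-Hessenberg determinant in the values of its subribbons; expanding along the first column unwinds precisely to the harmonic-product recursions of \cite[Lemma~2.3]{BY} (the same recursion quoted here in the proof of Lemma~\ref{lem:a12}). So the determinant formula gives you nothing beyond the starting point of \cite{BY}'s own proof, and the paper's genuinely new contribution---allowing a general ribbon $R$ for \emph{non-ribbon} shapes---plays no role.

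There is also circularity in your base cases. The evaluations $A_{1,3}(1)=\tfrac12\zeta(5)$ and $B_{1,3}(1)=\tfrac14\zeta(7)$ are not ``standard hook evaluations'': unpacking the latter via \eqref{eq:schurasmzv} gives $2\zeta(1,3,3)+\zeta(1,6)+\zeta(4,3)=\tfrac14\zeta(7)$, which is already a nontrivial weight-$7$ MZV identity and is exactly the $n=1$ instance of the theorem. The arithmetic content you defer to ``Broadhurst--Zagier and the depth reduction it encodes'' is precisely what \cite{BY} supplies via iterated-integral and generating-function manipulations; the Jacobi--Trudi formalism does not provide it. In short, your outline would at best repackage \cite{BY}'s proof rather than replace it, and as written it misreads how $R$ and the outside decomposition interact in the paper's framework.
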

	These formulae can be seen as analogues of the well-known $1$-$3$ formula for multiple zeta values:
	\begin{equation}\label{eq:31formula}
	\zeta
	\left(\ {\footnotesize
		\begin{ytableau}
		1        \\
		*(gray)3 \\
		\svdots    \\
		1        \\
		*(gray)3 
		\end{ytableau}}
	\ \right) = \zeta(\{1,3\}^n) = \frac{2\pi^{4n}}{(4n+2)!} = \frac{1}{4^n} \zeta(\{4\}^n)\,.
	\end{equation}
	In particular we see, that all the above examples are elements of $\Q[\pi^4,\zeta(3),\zeta(5),\dots]$, using the well-known facts that $\zeta^\star(\{4\}^k), \zeta(\{4\}^{n}) \in \Q[\zeta(4)] = \Q[\pi^4]$ (see for example \cite[Section 6]{HI}). We now give some other explicit evaluations for non-admissible $1$-$3$ indices.
	\begin{proposition} \label{prop:regformulas}
		For $n \geq 0$ we have
		\begin{align*}
		\zr(\{1,3\}^n,1) ={} & \zeta^*(\{1,3\}^n)T + \frac{1}{2^{2n-1}} \sum_{j=1}^n (-1)^j \zeta(4j+1) \zeta(\{4\}^{n-j})\,,\\
		\zr(\{3,1\}^n) ={} &  \zeta(\{3,1\}^{n-1},3)T +(-1)^n \sum_{k=0}^n \frac{1}{4^k} \zeta^\star(\{4\}^k) \zeta(\{4\}^{n-k})\\
		&+ \frac{1}{2^{2n-3}}\sum_{\substack{1 \leq j \leq n-1\\0 \leq k \leq n-1-j}} (-1)^{j+k} \zeta(4j+1) \zeta(4k+3) \zeta(\{4\}^{n-j-1-k}) \,.
		\end{align*}
	\end{proposition}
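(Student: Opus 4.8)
The plan is to derive both identities as specializations of the harmonic-regularized generating-series machinery for $1$-$3$ multiple zeta values, matching the non-admissible indices $\{1,3\}^n 1$ and $\{3,1\}^n$ to iterated-integral expressions and then reading off the polynomial in $T$. First I would recall from \cite{IKZ} that the harmonic-regularized value $\zr(k_1,\dots,k_r)$ is characterized by the two properties that it reduces to the ordinary MZV when $k_r\ge 2$ and that it satisfies the harmonic (stuffle) product for all indices; equivalently it is the image of the word $y^{k_1-1}x\cdots y^{k_r-1}x$ under the unique algebra homomorphism $\operatorname{reg}^{\ast}\colon \mathfrak{H}^{1}\to\R[T]$ extending $\zeta$ and sending $y\mapsto T$. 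The key input will be the admissible evaluations already available: the $1$-$3$ formula \eqref{eq:31formula}, the evaluations of $S_{1,3}(n)$ and $S^\star_{1,3}(n)$ in Theorem \ref{thm:SSstar}, and the facts $\zeta(\{4\}^m),\zeta^\star(\{4\}^m)\in\Q[\pi^4]$.

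For the first identity I would write $\zr(\{1,3\}^n,1)=\zr(\{1,3\}^n)\cdot T-(\text{correction})$ using the stuffle relation $\zr(\{1,3\}^n,1)=\zr(\{1,3\}^n)\,\zr(1)-\sum(\text{shorter terms})$, noting $\zr(1)=T$ and $\zr(\{1,3\}^n)=\zeta(\{1,3\}^n)=\zeta^\ast(\{1,3\}^n)$ is admissible; the correction terms are honest convergent MZVs obtained by stuffing the trailing $1$ into each entry of $\{1,3\}^n$, and these unwind, via the cyclic/duality structure of $1$-$3$ patterns, into the combination $\frac{1}{2^{2n-1}}\sum_{j=1}^n(-1)^j\zeta(4j+1)\zeta(\{4\}^{n-j})$. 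The appearance of $\zeta(4j+1)$ is forced because inserting a $1$ next to a block $1,3$ produces indices of the shape $\{1,3\}^{a},2,\{1,3\}^{b}$ or $\{1,3\}^{a},1,3,\dots$ whose evaluations are known (these are exactly the $A_{1,3}$- and $B_{1,3}$-type sums of Theorem \ref{thm:SSstar}, up to the $\pi^4$-power factors $\zeta(\{4\}^{n-j})$), and the signs $(-1)^j$ and the $2^{2n-1}$ come from collecting the $4^{-k}$ weights.

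For the second identity the same strategy applies to $\zr(\{3,1\}^n)$: its top entry is $1$ so it is non-admissible, and I would peel off the leading $3$ (or rather use that $\{3,1\}^n=\{3,1\}^{n-1}3,1$) to write $\zr(\{3,1\}^n)$ as $\zr(\{3,1\}^{n-1},3)\,T$ plus convergent corrections; here $\zr(\{3,1\}^{n-1},3)=\zeta(\{3,1\}^{n-1},3)$ is admissible and equals $\zeta(\{1,3\}^{n-1}\text{-type})$ by duality, which is in $\Q[\pi^4]$. The convergent part splits into a ``depth-reducing'' piece that recombines into $(-1)^n\sum_{k=0}^n 4^{-k}\zeta^\star(\{4\}^k)\zeta(\{4\}^{n-k})=(-1)^n S^\star_{1,3}(n)$ and a ``double'' piece carrying two odd zetas $\zeta(4j+1)\zeta(4k+3)$ times a $\pi^4$-power $\zeta(\{4\}^{n-j-1-k})$; the latter is precisely what one gets by applying the first identity's expansion once for the trailing $1$ and once more for an interior $1$, hence the product of an $A_{1,3}$-type and a $B_{1,3}$-type contribution, explaining both the index ranges $1\le j\le n-1$, $0\le k\le n-1-j$ and the constant $2^{2n-3}$.

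The main obstacle will be the bookkeeping of the stuffle expansion: one must show that all the non-odd-zeta contributions (those involving even zetas other than through the $\pi^4$-powers, or mixed products that do not collapse) cancel, and that the surviving terms assemble with exactly the stated signs and powers of $2$. I expect this to follow cleanly once one recognizes every intermediate sum as an $A_{1,3}(m)$, $B_{1,3}(m)$, $S_{1,3}(m)$ or $S^\star_{1,3}(m)$ and invokes Theorem \ref{thm:SSstar}, so the real work is an induction on $n$ organizing the telescoping of the stuffle terms rather than any new analytic estimate.
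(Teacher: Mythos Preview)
Your strategy is the paper's strategy: use the harmonic product with $\zr(1)=T$ to isolate the $T$-coefficient and then evaluate the convergent correction via the known $1$--$3$ stair evaluations of Theorem~\ref{thm:SSstar}. The difference is in how the correction terms are packaged. You plan to expand the ordinary stuffle product $\zr(1)\ast\zeta(\{1,3\}^n)$, enumerate the insertion and contraction terms, and then recognize and telescope them into $A_{1,3}$, $B_{1,3}$, $S_{1,3}$, $S^\star_{1,3}$ combinations by induction; the paper instead uses the \emph{Schur} harmonic product (\cite[Lemma~2.2]{BY}), so that the entire correction in the first identity is a single hook-shaped Schur MZV already evaluated in \cite[Corollary~3.6]{BY}, and the second identity becomes directly
\[
\zr(\{3,1\}^n)=\zeta(\{3,1\}^{n-1},3)\,T+\sum_{j=1}^{n-1}(-1)^{j}A_{1,3}(j)\,\zeta(\{3,1\}^{n-j-1},3)+(-1)^{n}S^\star_{1,3}(n),
\]
after which one substitutes the known values of $A_{1,3}(j)$, $S^\star_{1,3}(n)$, and $\zeta(3,\{1,3\}^m)$ from \cite{BY}. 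Both routes arrive at the same place; the Schur packaging simply eliminates the bookkeeping you identified as the main obstacle, so no induction is needed.
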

	\begin{proof}
		By the harmonic product (\cite[Lemma 2.2]{BY}) we have that for all $M\in \Z_{>0}$,
		\begin{align*}
		\zeta_M(\{1,3\}^n,1)  = 
		\zeta_M
		\left(
		\ {\footnotesize
			\begin{ytableau}
			1        \\
			*(gray)3 \\
			\svdots  \\
			1        \\
			*(gray)3 \\
			1
			\end{ytableau}}
		\ \right) 
		&{}= \zeta_M
		\left(
		{\footnotesize
			\begin{ytableau}
			1     
			\end{ytableau}}
		\right) 
		\zeta_M
		\left(
		\ {\footnotesize
			\begin{ytableau}
			1        \\
			*(gray)3 \\
			\svdots  \\
			1        \\
			*(gray)3 
			\end{ytableau}}
		\ \right) 
		-
		\zeta_M
		\left(
		\ {\footnotesize
			\begin{ytableau}
			\none & 1        \\
			\none & *(gray)3 \\
			\none & \svdots  \\
			\none & 1        \\
			1     & *(gray)3
			\end{ytableau}}
		\ \right) \,.
		\end{align*}
		The first equation in the Proposition follows from this together with Corollary 3.6 in \cite{BY}, which gives an explicit formula for the Schur multiple zeta value on the right, and that  $\zr(1)=T$.
		
		Similarly we can use the harmonic product formula to get that
		\[ \zr(\{3,1\}^n) =  \zeta(\{3,1\}^{n-1},3)T + \sum_{j=1}^{n-1} (-1)^{j} A_{1,3}(j) \zeta(\{3,1\}^{n-j-1},3) + (-1)^{n} S^\star_{1,3}(n) \,. \]
		Using the explicit formulae in \cite{BY} for $A_{1,3}(j)$ and $S_{1,3}(n)$ above, together with 
		\begin{align}\label{eq:mzv313}
		\zeta(3,\{1,3\}^n)
		= \sum_{k=0}^{n} \left(-\frac{1}{4}\right)^k \zeta(4k+3) \zeta(\{1,3\}^{n-k}) \,,
		\end{align}
		which is part of \cite[Corollary 3.6]{BY}, we get the desired result.
	\end{proof}

	\begin{theorem}\label{thm:13zeta}
		Every (regularized) Checkerboard style Schur multiple zeta value with alternating entries of $1$ and $3$ is an element in $\Q[\pi^4,\zeta(3),\zeta(5),\dots][T]$.
	\end{theorem}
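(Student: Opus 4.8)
The plan is to reduce $\zr(\kk)$, via Corollary~\ref{cor:ribbonjacobimzv}, to the short list of Checkerboard staircase strips that are evaluated in Theorem~\ref{thm:SSstar} and Proposition~\ref{prop:regformulas}. A Checkerboard style Schur multiple zeta value is by definition $\zr(\kk)$ for a skew shape $\lsm$ and a Young tableau $\kk$ whose entry at $(i,j)$ depends only on the parity of $i+j$; since $i+j\equiv j-i\pmod 2$, such a $\kk$ is constant along diagonals, so $\kk\in T^{\rm{diag}}(\lsm)$. We may assume $\lsm$ is edge-connected, since otherwise $\zr(\kk)$ factors as the product of the Checkerboard values attached to the edge-connected components of $\lsm$ (boxes in different components lie in no common row or column, so the defining sum splits), and each factor is again a Checkerboard $1$-$3$ value. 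For edge-connected $\lsm$ the set $c(\lsm)$ is an interval of integers, so we may take $R$ to be an anti-diagonal staircase ribbon (of the type occurring in~\eqref{eq:13stariexample}, of the appropriate length) with $c(R)=c(\lsm)$. Corollary~\ref{cor:ribbonjacobimzv} then writes $\zr(\kk)$ as a polynomial in the values $\zr(\kk')$, where $\kk'\in T^{\rm{diag}}(R')$ runs over the subribbons $R'$ of $R$ carrying the alternating $1$-$3$ filling induced from $\kk$ (which is again of Checkerboard type). Since $\Q[\pi^4,\zeta(3),\zeta(5),\dots][T]$ is a ring, it is enough to show that each block $\zr(\kk')$ lies in it.

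The second step is to classify these blocks. Because $R$ is an alternating staircase, every connected subribbon of $R$ is again, up to diagonal translation, an alternating staircase strip, determined by its two endpoints together with the ambient colouring (which is fixed by $c(\lsm)$). A short case check on whether each endpoint sits at a convex turn, a concave turn, or a straight run of $R$ shows that, apart from $\zr(\emptyset)=1$ and the two single boxes with values $T$ (entry $1$) and $\zeta(3)$ (entry $3$), every block is one of the four families $S_{a,b}(m)$, $S^\star_{a,b}(m)$, $A_{a,b}(m)$, $B_{a,b}(m)$ from this section, with $(a,b)=(1,3)$ or $(a,b)=(3,1)$; in the second case the corner entries all equal $1$, so the block is non-admissible.

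The third step is to evaluate the blocks. For $(a,b)=(1,3)$, Theorem~\ref{thm:SSstar} gives $S_{1,3}(m)$, $S^\star_{1,3}(m)$, $A_{1,3}(m)$, $B_{1,3}(m)$ as explicit combinations of $\zeta^\star(\{4\}^k)$, $\zeta(\{4\}^k)$, $\zeta(4m+1)$ and $\zeta(4m+3)$; since $\zeta^\star(\{4\}^k),\zeta(\{4\}^k)\in\Q[\pi^4]$, these all lie in $\Q[\pi^4,\zeta(3),\zeta(5),\dots]$ (with no $T$). For $(a,b)=(3,1)$ I would peel off the non-admissible corner boxes one at a time using the harmonic product for Schur multiple zeta values (\cite[Lemma~2.2]{BY}) together with the stuffle regularization of Lemma~\ref{lem:schurmzvasym}: each such step rewrites a $(3,1)$-block as a polynomial over $\Q[T]$ in single zeta values, strictly smaller blocks, admissible $(1,3)$-blocks, and the regularized column values $\zr(\{1,3\}^m,1)$ and $\zr(\{3,1\}^m)$, the latter two being computed explicitly in Proposition~\ref{prop:regformulas} and manifestly lying in $\Q[\pi^4,\zeta(3),\zeta(5),\dots][T]$. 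Descending on the number of boxes, every block, hence $\zr(\kk)$, lies in the target ring.

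The main obstacle I expect is this third step: organising the harmonic-product reductions of the non-admissible $(3,1)$-staircases uniformly in $m$, tracking the $T$-dependence through the regularization, and making sure that no transcendental constant outside $\Q[\pi^4,\zeta(3),\zeta(5),\dots]$ (in particular no genuinely new even-weight multiple zeta value) is ever introduced. A helpful safeguard is that Corollary~\ref{cor:ribbonjacobimzv} only needs a polynomial identity among the blocks, so redundancy or over-counting in the classification does no harm; the real work is to pin each individual block down to the list covered by Theorem~\ref{thm:SSstar} and Proposition~\ref{prop:regformulas}.
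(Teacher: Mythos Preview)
Your overall strategy via Corollary~\ref{cor:ribbonjacobimzv} matches the paper's, but the paper takes $R$ to be a \emph{column} rather than a staircase. With a column every subribbon is again a column, so the blocks are exactly the four families of ordinary multiple zeta values $\zeta(\{1,3\}^n)$, $\zeta(3,\{1,3\}^n)$, $\zr(\{3,1\}^n)$ and $\zr(\{1,3\}^n,1)$, which already lie in the target ring by \eqref{eq:31formula}, \eqref{eq:mzv313} and Proposition~\ref{prop:regformulas}. This makes the whole proof two sentences; no case split on staircase endpoints and no harmonic-product reduction is needed.

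Your staircase idea can also be made to work, but as written your third step is a genuine gap. The point you are missing is that once the staircase $R$ is fixed, a parity check shows that \emph{all} subribbons carry the \emph{same} ordered pair $(a,b)$, never both $(1,3)$ and $(3,1)$; which one appears depends only on whether the first step of $R$ is ``up'' or ``right'' relative to the checkerboard colouring. Since Corollary~\ref{cor:ribbonjacobimzv} lets you choose $R$, you may orient the staircase so that $(a,b)=(1,3)$, and then Theorem~\ref{thm:SSstar} evaluates every block directly --- your obstacle never arises. Without this observation you would have to prove that each non-admissible staircase $S_{3,1}(m)$, $S^\star_{3,1}(m)$, $A_{3,1}(m)$, $B_{3,1}(m)$ lies in $\Q[\pi^4,\zeta(3),\zeta(5),\dots][T]$, and your sketched peeling does not do this: it is unclear why the \emph{column} values $\zr(\{1,3\}^m,1)$, $\zr(\{3,1\}^m)$ should emerge from harmonic products on a \emph{staircase}, and you have not checked that no new constant appears along the way.
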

	\begin{proof}
		By Corollary \ref{cor:ribbonjacobimzv} every $1$-$3$ Checkerboard style Schur multiple zeta value can be written as a polynomial in $\zeta(\{1,3\}^n),\zeta(3,\{1,3\}^n),\zr(\{3,1\}^n)$ and, $\zr(1,\{3,1\}^n)$, by choosing a long enough column as the ribbon $R$. The statement then follows from \eqref{eq:31formula}, \eqref{eq:mzv313} and Proposition \ref{prop:regformulas}.
	\end{proof}
	Notice that in some cases it is better to choose a different ribbon $R$ instead of a long column, since the stairs $S_{1,3}$, $S^\star_{1,3}$, $A_{1,3}$ and $B_{1,3}$ have nicer evaluations than the multiple zetas $\zeta(\{1,3\}^n),\zeta(3,\{1,3\}^n),\zr(\{3,1\}^n)$ and $\zr(1,\{3,1\}^n)$. In particular by choosing $\ytableausetup{centertableaux, boxsize=0.5em} R = {\footnotesize 
		\begin{ytableau}
		\none & \,& \,\\
		\, & \,\\
		\,
		\end{ytableau}}$ we obtain the earlier formula \eqref{eq:3x3square13} for the $3 \times 3$ square.
	\begin{proposition}\label{prop:fstair} Fix one $F\in \{S, S^\star, A,B \}$ and let $\kk$ be an Checkerboard style Young tableau with alternating entries of $a, b \geq 1$. If $\kk$ can be tessellated purely by $F_{a,b}$ stairs, then $\zr(\kk)\in \Q[F_{a,b}(n) \mid n \geq 0 ]$. 
	\end{proposition}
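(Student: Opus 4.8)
The plan is to feed the tessellation directly into Theorem~\ref{thm:jacobiformzv}. We may assume $\lsm$ is edge-connected, since $\zr$ is multiplicative over the edge-connected components of a diagram and each component is again tessellated purely by $F_{a,b}$ stairs. By the hypothesis, $\lsm$ admits an outside decomposition $\Theta=(\theta_1,\dots,\theta_\ell)$ in which every $\theta_i$ is, up to diagonal translation, an $F_{a,b}(n_i)$ stair. As $\kk$ is of Checkerboard style, the entry in a box depends only on the parity of its content; hence the filling induced on each $\theta_i$ is the standard $F_{a,b}(n_i)$ filling, and every induced filling $R^\kk_\Theta(i,j)$ is the content-constant, $\{a,b\}$-alternating filling of the subribbon $R_\Theta(i,j)$.

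The combinatorial heart is the claim that every occurring $R_\Theta(i,j)$, with its induced filling, is one of: the empty diagram; a single box filled with $a$ or $b$; a diagonal translate of an $F_{a,b}(m_{ij})$ stair; or undefined. A ribbon carrying a Checkerboard filling has entries alternating in $a$ and $b$ along the content, so it is determined by its length together with its \emph{boundary data}: the entry at, and the direction of the adjoining step at, the minimal-content box and the maximal-content box. The four families $S,S^\star,A,B$ realize exactly the four pairs of boundary entries drawn from $\{a,b\}$ (forcing even length for $S,S^\star$ and odd length for $A,B$), and a staircase ribbon with Checkerboard entries whose boundary data is of $F$-type and whose length is $2m$ (resp.\ $2m+1$) is precisely $F_{a,b}(m)$. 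Now the $\theta_i$ nest correctly into the ribbon $R_\Theta$ of Definition~\ref{def:consubrib}, and, being all of type $F$, they force $R_\Theta$ to be an $F_{a,b}(N)$ stair: its boundary data at the extreme contents is that of the tiles containing the minimal- and maximal-content boxes of $\lsm$, and the interior is a staircase obtained from the nesting. Since $R_\Theta(i,j)$ is by definition the subribbon of $R_\Theta$ spanning the contents from the start box of $\theta_i$ to the end box of $\theta_j$, it inherits the low-content boundary data of $\theta_i$ and the high-content boundary data of $\theta_j$, both of $F$-type, together with a staircase interior; a content-parity count shows the degenerate cases match up — for $F\in\{S,S^\star\}$ one can get $R_\Theta(i,j)=\emptyset$, and $\zr(\emptyset)=1=F_{a,b}(0)$, whereas for $F\in\{A,B\}$ the empty case is excluded on parity grounds and the smallest $R_\Theta(i,j)$ is a single box filled with $a$ (for $F=A$) or $b$ (for $F=B$), of value $\zr(a)=A_{a,b}(0)$, resp.\ $\zr(b)=B_{a,b}(0)$. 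In all remaining defined cases $\zr(R^\kk_\Theta(i,j))=F_{a,b}(m_{ij})$, and undefined entries contribute $0$.

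Granting this, Theorem~\ref{thm:jacobiformzv} gives
\[ \zr(\kk)=\det\bigl(\zr(R^\kk_\Theta(i,j))\bigr)_{1\le i,j\le\ell}, \]
and the right-hand side is a polynomial with integer coefficients in the entries $\zr(R^\kk_\Theta(i,j))$, each of which lies in $\{0,1\}\cup\{F_{a,b}(m)\mid m\ge0\}\subseteq\Q[F_{a,b}(n)\mid n\ge0]$. Hence $\zr(\kk)\in\Q[F_{a,b}(n)\mid n\ge0]$, as claimed.

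I expect the main obstacle to be the middle paragraph: making rigorous, uniformly over the four families $F\in\{S,S^\star,A,B\}$ and including all degenerate and undefined subribbons, both that the nesting of $F_{a,b}$ stairs produces an $F_{a,b}$ stair $R_\Theta$ and that every $R_\Theta(i,j)$ is again of type $F$ — in particular not one of the other stair types nor the variant with $a$ and $b$ interchanged. This comes down to tracking the boundary data and the content parities through the nesting construction of \cite{HG}, which is routine but has to be carried out case by case.
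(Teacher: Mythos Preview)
Your strategy is close to the paper's, but there is a real gap in the very first step. You assert that ``by the hypothesis, $\lsm$ admits an outside decomposition $\Theta$ in which every $\theta_i$ is an $F_{a,b}(n_i)$ stair'', effectively treating the given $F$-tessellation as an outside decomposition. That is not automatic: a partition of $\lsm$ into $F$-stairs need not have every tile start on the bottom/left perimeter and end on the top/right perimeter. For a concrete failure take $F=S$, $(a,b)=(1,3)$ and $\lsm=(4,4,4,4,3,2,1)/(3,2,1,1,1)$ (the paper's own $S$-example). The three tiles
\[
\{(7,1),(6,1),(6,2),(5,2)\},\qquad \{(5,3),(4,3),(4,4),(3,4)\},\qquad \{(4,2),(3,2),(3,3),(2,3),(2,4),(1,4)\}
\]
are genuine $S_{1,3}$ stairs of sizes $2,2,3$ covering $\lsm$, yet the first tile ends at $(5,2)$ with both $(4,2)$ and $(5,3)$ in $D(\lsm)$, so $(5,2)$ is neither on the top nor on the right perimeter. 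This $S$-tessellation is therefore \emph{not} an outside decomposition, and Theorem~\ref{thm:jacobiformzv} cannot be applied to it.

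The paper sidesteps this entirely. It does not try to promote the tessellation to an outside decomposition; it uses only the \emph{existence} of a pure $F$-tessellation to conclude that the box of minimum content of $\lsm$ carries the $F$-start entry and the box of maximum content carries the $F$-end entry, so that one may choose an $F$-shaped ribbon $R$ with $c(R)=c(\lsm)$. Corollary~\ref{cor:ribbonjacobimzv} (via the construction in the proof of Corollary~\ref{cor:ribbonjacobi}) then manufactures an outside decomposition $\Theta$ with $R_\Theta=R$, and one checks that each $R_\Theta(i,j)$ is again an $F$-stair. Your middle paragraph --- the boundary-data and content-parity analysis of $R_\Theta$ and the $R_\Theta(i,j)$, including the degenerate cases --- is essentially correct and is exactly what is needed for that last check; the problem is only that you have attached it to a decomposition that may not be outside. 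If you replace ``the given tessellation'' by ``the outside decomposition produced from the $F$-shaped ribbon via Corollary~\ref{cor:ribbonjacobi}'' (equivalently, pass to the coarsest $F$-tessellation, one tile per diagonal translate of $R$), and add one line explaining why its tiles start and end with the correct entries --- this follows from facts such as ``in an $S$-tessellation every $a$-box has a box below it in $D$'', which are immediate consequences of $F$-tessellatability --- your argument goes through and becomes essentially the paper's.
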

	\begin{proof}
		Again we use Corollary \ref{cor:ribbonjacobimzv} and choose a particular ribbon $R$. In this case, we choose the ribbon $R$ to be the one with the same shape as $F$ and such that $c(R) = c(\lsm)$.  We will call this an $F$-type stair in the following. This choice is possible since $\kk$ is tessellated purely by $F$-type stairs and therefore the box with the smallest content is the starting box of a $F$-type stair and the box with the largest content is the ending box of a $F$-type stair. The construction of the outside decomposition $\Theta$ in the proof of Corollary \ref{cor:ribbonjacobi} then assures that all the subribbons $R_\Theta(i,j)$ are also $F$-type stairs. The entries in the matrix of Corollary \ref{cor:ribbonjacobimzv} are therefore all of the form $F_{a,b}(n)$ for some $n\geq 0$ which then gives $\zr(\kk)\in \Q[F_{a,b}(n) \mid n \geq 0 ]$.  \end{proof}

	\begin{corollary} \label{cor:13tessel}Let $\kk$ be an admissible Checkerboard style Young tableau with alternating entries of $1$ and $3$.
		\begin{enumerate}[i)]
			\item If $\kk$ can be tessellated purely by $S_{1,3}$ stairs, then $\zeta(\kk) \in \Q[\pi^4]$.
			\item If $\kk$ can be tessellated purely by $S^\star_{1,3}$ stairs, then $\zeta(\kk) \in \Q[\pi^4]$.
			\item  If $\kk$ can be tessellated purely by $A_{1,3}$ stairs,  then $\zeta(\kk) \in \Q[\zeta(4n+1) \mid n\geq 1]$.
			\item If $\kk$ can be tessellated purely by $B_{1,3}$ stairs,  then $\zeta(\kk) \in \Q[\zeta(4n+3) \mid n\geq 0]$.
		\end{enumerate}
	\end{corollary}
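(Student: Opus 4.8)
The plan is to obtain all four statements as immediate consequences of Proposition~\ref{prop:fstair}, the explicit evaluations in Theorem~\ref{thm:SSstar}, and the classical fact $\zeta^\star(\{4\}^k),\zeta(\{4\}^k)\in\Q[\zeta(4)]=\Q[\pi^4]$ referenced in the text. In each of the four cases I would fix the corresponding $F\in\{S,S^\star,A,B\}$ and apply Proposition~\ref{prop:fstair} with $(a,b)=(1,3)$: since $\kk$ is tessellated purely by $F_{1,3}$ stairs, this yields $\zr(\kk)\in\Q[F_{1,3}(n)\mid n\ge 0]$. As $\kk$ is assumed admissible, $\zr(\kk)$ carries no $T$-dependence and equals the honest number $\zeta(\kk)$, so it remains only to identify the relevant subring of $\R$ generated by the values $F_{1,3}(n)$.

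For parts i) and ii) I would substitute $S_{1,3}(0)=1$, $S_{1,3}(n)=4^{-n}\zeta^\star(\{4\}^n)$, and $S^\star_{1,3}(0)=1$, $S^\star_{1,3}(n)=\sum_{k=0}^{n}4^{-k}\zeta^\star(\{4\}^k)\zeta(\{4\}^{n-k})$ from Theorem~\ref{thm:SSstar}; since each factor $\zeta^\star(\{4\}^k)$ and $\zeta(\{4\}^{n-k})$ lies in $\Q[\pi^4]$, every generator $S_{1,3}(n)$ resp.\ $S^\star_{1,3}(n)$ does too, whence $\zeta(\kk)\in\Q[\pi^4]$. For part iv) I would substitute $B_{1,3}(0)=\zr(3)=\zeta(3)$ and $B_{1,3}(n)=4^{-n}\zeta(4n+3)$ for $n\ge1$, so that the generators are exactly $\zeta(3),\zeta(7),\zeta(11),\dots$; hence $\Q[B_{1,3}(n)\mid n\ge 0]=\Q[\zeta(4n+3)\mid n\ge0]$ and $\zeta(\kk)$ lies in this ring.

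Part iii) is where the one genuinely delicate point occurs. Here $A_{1,3}(n)=2\cdot4^{-n}\zeta(4n+1)$ for $n\ge1$, but $A_{1,3}(0)=\zr(1)=T$, so a priori Proposition~\ref{prop:fstair} only gives $\zeta(\kk)=\zr(\kk)\in\Q[\zeta(4n+1)\mid n\ge1][T]$. To finish I would show that the $T$'s must cancel: the left-hand side $\zeta(\kk)$ is a constant polynomial in $T$ (because $\kk$ is admissible), so, expanding the right-hand side as a polynomial in $T$ with coefficients in the subring $\Q[\zeta(4n+1)\mid n\ge1]\subset\R$, all coefficients of positive powers of $T$ must vanish and $\zeta(\kk)$ equals the constant term, which lies in $\Q[\zeta(4n+1)\mid n\ge1]$. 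This is exactly the ``$T$'s cancel in the determinant'' phenomenon noted after Theorem~\ref{thm:jacobiformzv}, and making it precise is the only step beyond routine substitution; everything else is bookkeeping with the formulae of Theorem~\ref{thm:SSstar} and the inclusion $\Q[\zeta(4)]=\Q[\pi^4]$.
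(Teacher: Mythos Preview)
Your proposal is correct and follows essentially the same route as the paper: invoke Proposition~\ref{prop:fstair} with the appropriate $F\in\{S,S^\star,A,B\}$ and then feed in the explicit evaluations from Theorem~\ref{thm:SSstar} together with $\zeta^\star(\{4\}^k),\zeta(\{4\}^k)\in\Q[\pi^4]$. Your treatment of part~iii) is in fact more careful than the paper's one-line proof: you correctly isolate the subtlety that $A_{1,3}(0)=\zr(1)=T$ may occur among the determinant entries, and your argument that the $T$-dependence must vanish (since admissibility forces $\zr(\kk)=\zeta(\kk)\in\R$ to be constant in $T$, hence equal to its constant term in $\Q[\zeta(4n+1)\mid n\ge 1]$) is exactly what is needed to close that gap.
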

	\begin{proof}
		This is a direct consequence of Proposition \ref{prop:fstair} and the explicit formulae for the Schur multiple zeta values $S_{1,3}$, $S^\star_{1,3}$, $A_{1,3}$ and $B_{1,3}$ given in Theorem \ref{thm:SSstar}.
	\end{proof}
	
	Proposition \ref{prop:fstair} and Corollary \ref{cor:ribbonjacobimzv} give a more direct proof for Theorem 4.4 in \cite{BY}. There the authors used the Jacobi-Trudi formula for a column $R$ instead of a stair $R$, to show that certain thick stairs can be written as polynomials in the stairs $B_{a,b}$. We want to emphasize that the general Jacobi-Trudi determinant in  Corollary \ref{cor:ribbonjacobimzv}  is much more elegant for this purpose since no complicated matrix manipulations are necessary anymore.
	
	\begin{example}
		\begin{enumerate}[i)]
			\item[i), ii)\hspace{-1em}] \hspace{1em} We note that tableaux of the form \( \lsm \) where \( \lambda_i - \mu_i \in 2\Z \) and \( \lambda_i - \lambda_{i-1} \) odd can be tessellated purely by $S^\star$-type stairs.  The transpose can be tessellated purely by $S$-type stairs.  Hence we get, for example
			\begin{equation*}
			\ytableausetup{mathmode,boxsize=1.2em,aligntableaux=center} 
			\zeta\left(\ {\footnotesize
				\begin{ytableau}
				\none&\none&\none&\none&\none&\none&1&*(gray)3&1&*(gray)3\\
				\none&\none&\none&1&*(gray)3&1&*(gray)3&1&*(gray)3\\
				\none&\none&1&*(gray)3&1&*(gray)3\\
				\none&1&*(gray)3&1&*(gray)3\\
				1&*(gray)3
				\end{ytableau}
			}\ \right),
			\zeta\left(\ {\footnotesize
				\begin{ytableau}
				\none&\none&\none&1\\
				\none&\none&1&*(gray)3\\
				\none&1&*(gray)3&1\\
				\none&*(gray)3&1&*(gray)3\\
				\none&1&*(gray)3\\
				1&*(gray)3\\
				*(gray)3
				\end{ytableau}
			}\ \right) \in \Q[\pi^4] \,.
			\end{equation*}
			\item[iii)] We note that admissible tableaux of the form \( \lambda \slash (2n, 2n-1, \ldots, 2, 1) \) can be tessellated purely by $A$-type stairs.  Hence we get, for example
			\begin{equation*}
			\ytableausetup{mathmode,boxsize=1.2em,aligntableaux=center} 
			\zeta\left(\ {\footnotesize
				\begin{ytableau}
				\none&\none&\none&\none&\none&1\\
				\none&\none&\none&\none&1&*(gray)3\\
				\none&\none&\none&1&*(gray)3&1\\
				\none&\none&1&*(gray)3&1&*(gray)3\\
				\none&1&*(gray)3&1&*(gray)3\\
				1&*(gray)3
				\end{ytableau}
			}\ \right) \in \Q[\zeta(4n + 1) \mid n \geq 1] \,.
			\end{equation*}
			\item[iv)] We note that tableaux of the form \( (2n-1, 2n-2, \ldots, 2, 1) \slash \mu \), where \( \mu \subset (2n-3, 2n-4, \ldots, 2, 1) \) can be tessellated purely by $B$-type stairs.  Hence we get, for example
			\begin{equation*}
			\ytableausetup{mathmode,boxsize=1.2em,aligntableaux=center} 
			\zeta\left(\ {\footnotesize
				\begin{ytableau}
				\none&\none&\none&\none&*(gray)3&1&*(gray)3\\
				\none&\none&\none&*(gray)3&1&*(gray)3\\
				\none&1&*(gray)3&1&*(gray)3\\
				\none&*(gray)3&1&*(gray)3\\
				*(gray)3&1&*(gray)3\\
				1&*(gray)3\\
				*(gray)3
				\end{ytableau}
			}\ \right) \in \Q[\zeta(4n + 3) \mid n \geq 0] \,.
			\end{equation*}
		\end{enumerate}
	\end{example}
	
	We can also consider Checkerboard style Schur multiple zeta values with alternating $1$ and $2$ entries.  In \cite{BY}, some results were stated about the \( A, B, S, S^\star \) stairs with these entries, from which we obtain further results using Proposition \ref{prop:fstair}.
	
	\begin{lemma} \label{lem:a12} For $n\geq 1$ we have
		\begin{align*}
		\label{for:A13}
		\ytableausetup{mathmode,boxsize=1.2em,aligntableaux=center} 
		A_{1,2}(n)
		=\zeta
		\left(
		\ {\footnotesize
			\begin{ytableau}
			\none & \none    & \none  & 1        \\
			\none & \none    & \adots & *(gray)2 \\
			\none & 1        & \adots \\
			1     & *(gray)2  
			\end{ytableau}}
		\ \right)
		=3 \zeta(3n+1)\,.
		\end{align*}
	\end{lemma}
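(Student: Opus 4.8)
Since $A_{1,2}(n)$ is admissible — its only corners sit at the up-turns of the zigzag, which carry the entry $2$ — we have $A_{1,2}(n)=\zeta(\kk_n)$ for the ribbon-shaped tableau $\kk_n$ with $2n+1$ boxes. Label these boxes $P_1,\dots,P_{2n+1}$ from the bottom-left corner to the top-right corner; the semi-standard conditions then read $m_1\le m_2> m_3\le m_4>\cdots> m_{2n+1}$, and $P_j$ carries the exponent $1$ for $j$ odd and $2$ for $j$ even (so the weight is $3n+1$). The first step of the plan is to unfold each weak inequality $m_{2i-1}\le m_{2i}$ into the two cases ``$<$'' and ``$=$'', which rewrites $\zeta_M(\kk_n)$, and hence $A_{1,2}(n)$, as an explicit $\Z$-linear combination of (truncated) multiple zeta values of weight $3n+1$; one must record exactly which indices occur and with which multiplicities.

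The main step is to show that this combination collapses to $3\zeta(3n+1)$. I would argue by induction on $n$ via a harmonic-product relation in the spirit of the proof of Proposition~\ref{prop:regformulas}: multiplying $\zeta_M(1)$ onto a shorter zigzag glues on a single box, and grouping the resulting gluings expresses $\zeta_M(\kk_n)$ through $\zeta_M(\kk_{n-1})$ together with the $1$-$2$ stairs $B_{1,2}$, $S_{1,2}$ and $S^\star_{1,2}$ — precisely the analogue of the identity for $\zr(\{3,1\}^n)$ in the proof of Proposition~\ref{prop:regformulas}, where $A_{1,3}$, $S_{1,3}$, $S^\star_{1,3}$ play the role of these stairs. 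Passing to $M\to\infty$ and inserting the evaluations of the $1$-$2$ stairs (the results of \cite{BY} alluded to above the Lemma) turns this into a recursion for $A_{1,2}(n)$; solving it and checking the base case $A_{1,2}(1)=2\zeta(1,1,2)+\zeta(2,2)+\zeta(1,3)=3\zeta(4)$ — where $\zeta(1,1,2)=\zeta(4)$ and $\zeta(2,2)+\zeta(1,3)=\zeta(4)$ are both instances of the sum formula — yields $A_{1,2}(n)=3\zeta(3n+1)$.

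A self-contained variant that bypasses \cite{BY} is to apply Corollary~\ref{cor:ribbonjacobimzv} directly to $A_{1,2}(n)$ with $R$ taken to be the length-$(2n+1)$ column, which has the same content set $\{-n,\dots,n\}$: this presents $\zr(A_{1,2}(n))$ as a polynomial in regularized multiple zeta values of alternating $1$-$2$ words, such as $\zr(\{1,2\}^j,1)$ and $\zr(2,\{1,2\}^j)$, which can be evaluated exactly as in Proposition~\ref{prop:regformulas}. In all approaches the delicate point is the same: out of the many weight-$(3n+1)$ multiple zeta values that appear, one must exhibit the cancellation that leaves a single zeta value with the clean rational coefficient $3$; once the indices and multiplicities are book-kept uniformly in $n$ this collapse is forced by the sum formula, but organising that bookkeeping is where the real work lies.
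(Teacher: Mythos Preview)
What you have written is a plan rather than a proof, and the plan has a concrete gap. Your proposed recursion hinges on feeding in evaluations of the $1$-$2$ stairs $B_{1,2}$, $S_{1,2}$, $S^\star_{1,2}$ from \cite{BY}. But unlike the $1$-$3$ case, there is no known closed form for $B_{1,2}(n)$ --- \cite{BY} does not provide one, and indeed Corollary~\ref{cor:12tessel} in this paper conspicuously omits the $B$-case. So the recursion you sketch cannot be closed. Likewise, the ``sum formula forces the collapse'' claim is only verified at $n=1$; for general $n$ the MZVs appearing in the expansion of $A_{1,2}(n)$ are a proper subset of all admissible indices at each depth, with nontrivial multiplicities, and you give no argument that these multiplicities conspire so that the sum formula applies.

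The paper's proof uses different ingredients that you do not mention. It invokes the recursion of \cite[Lemma~2.3]{BY},
\[
A_{1,2}(n)=(-1)^{n-1}L_{1,2}(n)-\sum_{k=1}^{n-1}(-1)^{n-k}A_{1,2}(k)\,\zeta(\{1,2\}^{n-k}),
\]
where $L_{1,2}(n)$ is the column $\{1,2\}^n$ with one extra box glued to the bottom-left. The key step --- and the one your sketch is missing an analogue of --- is evaluating $L_{1,2}(n)$: via the iterated-integral expression of \cite[Eq.~6.3]{NPY} and then MZV duality one gets $L_{1,2}(n)=3\sum_{l+m=n-1}\zeta(\{3\}^m,4,\{3\}^l)$. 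With this in hand, and using $\zeta(\{1,2\}^j)=\zeta(\{3\}^j)$, the induction reduces to a harmonic-product telescoping of $\zeta(3k+1)\cdot\zeta(\{3\}^{n-k})$ against $\sum\zeta(\{3\}^m,4,\{3\}^l)$, not to the sum formula. Your base case $A_{1,2}(1)=3\zeta(4)$ is correct, but to push through you need either the $L_{1,2}$ evaluation or a genuine substitute for it.
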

	\begin{proof}
		This statement was stated in \cite{BY} without proof, which we will give now.
		By \cite[Lemma 2.3]{BY} we have for $n\geq 1$
		\begin{align*}
		A_{1,2}(n)&{}=(-1)^{n-1}L_{1,2}(n)-\sum^{n-1}_{k=1}(-1)^{n-k}A_{1,2}(k)\,\zeta(\{1,2\}^{n-k})\,,
		\end{align*}
		where for $n\geq 1$ the $L_{1,2}(n)$ are given by 
		\begin{align*}
		L_{1,2}(n)=
		\zeta
		\left(
		\ {\footnotesize
			\begin{ytableau}
			\none & 1       \\
			\none & *(gray)2 \\
			\none & \svdots  \\
			\none & 1        \\
			1    & *(gray)2
			\end{ytableau}}
		\ \right) \,.
		\end{align*}
		These can be written in terms of multiple zeta values by using the iterated integral expression in Equation 6.3 of \cite{NPY}.  From this we obtain the following expression
		\begin{align*}
		L_{1,2}(n) &{}= 3\sum_{\substack{l+m = n-1\\l,m \geq 0}} \zeta(\{1,2\}^l,1,1,2,\{1,2\}^m) = 3\sum_{\substack{l+m = n-1\\l,m \geq 0}} \zeta(\{3\}^m,4,\{3\}^l) \,.
		\end{align*}
		Here the last equality follows from the duality formula for multiple zeta values. Clearly we have  $A_{1,2}(1) = 3 \zeta(4)$ and by using induction on $n$, and the duality $\zeta(\{1,2\}^{n-k}) = \zeta(\{3\}^{n-k})$, we must only show
		\begin{align*}
		3\zeta(3n+1) = (-1)^{n-1}3\sum_{\substack{l+m = n-1\\l,m \geq 0}} \zeta(\{3\}^m,4,\{3\}^l)- 3\sum^{n-1}_{k=1}(-1)^{n-k}\zeta(3k+1) \,\zeta(\{3\}^{n-k})\,.
		\end{align*}
		We see this follows directly by the harmonic product formula for multiple zeta values, since terms in the second summation telescope. 
	\end{proof}
	\begin{corollary}\label{cor:12tessel}
		Let $\kk$ be an admissible Checkerboard style Young tableau with alternating entries of $1$ and $2$.
		\begin{enumerate}[i)]
			\item If $\kk$ can be tessellated purely by $S_{1,2}$ stairs, then $\zeta(\kk) \in \Q[\zeta(3n) \mid n\geq 1]$.
			\item If $\kk$ can be tessellated purely by $S^\star_{1,2}$ stairs, then $\zeta(\kk) \in \Q[\zeta(3n) \mid n\geq 1\text{ odd}]$.
			\item  If $\kk$ can be tessellated purely by $A_{1,2}$ stairs,  then $\zeta(\kk) \in \Q[\zeta(3n+1) \mid n\geq 1]$.
		\end{enumerate}
	\end{corollary}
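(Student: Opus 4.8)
The plan is to mimic the proof of Corollary \ref{cor:13tessel}: use Proposition \ref{prop:fstair} to reduce $\zeta(\kk)$ to a polynomial in the relevant stair values, then insert the known evaluations of those stairs and read off the ring. Concretely, for $F\in\{S,S^\star,A\}$ Proposition \ref{prop:fstair} applied with $(a,b)=(1,2)$ gives $\zr(\kk)\in\Q[F_{1,2}(n)\mid n\geq 0]$, and since $\kk$ is admissible $\zr(\kk)=\zeta(\kk)$; more precisely, the Jacobi-Trudi determinant of Corollary \ref{cor:ribbonjacobimzv} expresses $\zeta(\kk)$ as a $\Q$-polynomial in the entries $\zr(R^\kk_\Theta(i,j))$, each of which is an $F$-type stair value by the construction in the proof of Proposition \ref{prop:fstair}.

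For case iii) I would invoke Lemma \ref{lem:a12}: the matrix entries are $A_{1,2}(m)$, and $A_{1,2}(m)=3\zeta(3m+1)$ for $m\geq 1$ while $A_{1,2}(0)=\zr(1)=T$. (A subribbon of an $A$-type stair is admissible unless it is a single box carrying the entry $1$, since all of its corners carry the entry $2$; and because $\kk$ is tiled by $A_{1,2}$ stairs, each $b$-box of $\kk$ has a box directly above it, which forces every $\theta_i$ that starts at a $2$-content to have at least two boxes, so no single $b$-box — hence no $\zeta(2)$ — can occur.) Thus the determinant is a $\Q$-polynomial in $T$ and the numbers $3\zeta(3m+1)$; collecting it in powers of $T$ and using that $\zeta(\kk)\in\R$ is $T$-free (so the $T$-terms cancel, as remarked after Theorem \ref{thm:jacobiformzv}), its $T^0$-coefficient equals $\zeta(\kk)$ and lies in $\Q[\zeta(3m+1)\mid m\geq 1]$.

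For cases i) and ii) I would use the explicit evaluations of $S_{1,2}(n)$ and $S^\star_{1,2}(n)$ from \cite{BY} (the $(1,2)$-analogues of Theorem \ref{thm:SSstar}); here $S_{1,2}(n)$ and $S^\star_{1,2}(n)$ are admissible for all $n\geq 1$ and the matrix entries are again of these forms, so no regularization is needed. The point is that $S_{1,2}(n)\in\Q[\zeta(3m)\mid m\geq 1]$ while $S^\star_{1,2}(n)$ lies in the smaller ring $\Q[\zeta(3m)\mid m\geq 1\text{ odd}]$. This I would check via generating functions: with $\zeta^\star(\{3\}^n)=h_n((k^{-3})_{k\geq1})$ and $\zeta(\{3\}^n)=e_n((k^{-3})_{k\geq1})$ one has
\[
\sum_{n\geq 0}\zeta^\star(\{3\}^n)\,x^{3n}=\exp\!\Big(\sum_{j\geq 1}\tfrac{\zeta(3j)}{j}x^{3j}\Big),\qquad
\sum_{n\geq 0}\zeta(\{3\}^n)\,x^{3n}=\exp\!\Big(\sum_{j\geq 1}\tfrac{(-1)^{j+1}\zeta(3j)}{j}x^{3j}\Big),
\]
so both families lie in $\Q[\zeta(3m)\mid m\geq 1]$, and since (up to the harmonic and duality manipulations of \cite{BY}) $S_{1,2}(n)=\zeta^\star(\{3\}^n)$ and $S^\star_{1,2}(n)=\sum_{k=0}^n\zeta^\star(\{3\}^k)\zeta(\{3\}^{n-k})$, for the latter the two exponentials multiply and the even-$j$ terms cancel, leaving $\exp\big(\sum_{j\text{ odd}}\tfrac{2\zeta(3j)}{j}x^{3j}\big)$, whence $S^\star_{1,2}(n)\in\Q[\zeta(3m)\mid m\geq 1\text{ odd}]$. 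Inserting these into the determinant yields i) and ii).

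The main obstacle is the second half of the previous paragraph: pinning down the precise $(1,2)$-evaluations of $S_{1,2}(n)$ and $S^\star_{1,2}(n)$ — which is where one must actually use the iterated-integral, harmonic-product and duality machinery of \cite{BY}, exactly as in the proof of Lemma \ref{lem:a12} — together with the generating-function bookkeeping above. By contrast, the reduction step via Proposition \ref{prop:fstair} and the handling of the $T$-terms in iii) are then routine.
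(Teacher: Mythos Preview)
Your strategy matches the paper's exactly: apply Proposition~\ref{prop:fstair} with $F\in\{S,S^\star,A\}$ and then feed in the explicit evaluations of the relevant stairs. For i) the paper obtains $S_{1,2}(n)=\zeta^\star(\{3\}^n)$ from duality together with \cite[Lemma~3.3]{BY}, just as you do; for iii) the paper simply invokes Lemma~\ref{lem:a12} and Proposition~\ref{prop:fstair} without discussing the $T$-cancellation you spell out (your treatment is more careful there, though the parenthetical about excluding single $b$-boxes and $\zeta(2)$ is superfluous once one accepts the conclusion of Proposition~\ref{prop:fstair} that all matrix entries are genuine $A$-type stairs $A_{1,2}(m)$).

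The only substantive difference is in ii). The paper does not go via your convolution $S^\star_{1,2}(n)=\sum_{k}\zeta^\star(\{3\}^k)\zeta(\{3\}^{n-k})$; instead it identifies $S^\star_{1,2}(n)=\zeta^\star(\{1,2\}^n)$ (from \cite[Lemma~3.3]{BY} and \cite[Eq.~38]{HI}) and then quotes the ready-made expansion \cite[Eq.~39]{HI},
\[
\zeta^\star(\{1,2\}^n)=\sum_{i_1+3i_3+5i_5+\cdots=n}\frac{2^{\,i_1+i_3+\cdots}\,\zeta(3)^{i_1}\zeta(9)^{i_3}\zeta(15)^{i_5}\cdots}{1^{i_1}i_1!\,3^{i_3}i_3!\,5^{i_5}i_5!\cdots}\,,
\]
which is precisely the coefficient of $t^n$ in your $\exp\big(\sum_{j\text{ odd}}\tfrac{2\zeta(3j)}{j}t^j\big)$. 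So the two routes coincide: yours is a self-contained generating-function derivation of that same identity, while the paper outsources it to \cite{HI}. Either way one lands in $\Q[\zeta(3m)\mid m\geq 1\text{ odd}]$.
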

	\begin{proof} Using the duality $\zeta(\{1,2\}^n)=\zeta(\{3\}^n)$ together \cite[Lemma 3.3]{BY} we obtain $S_{1,2}(n) = \zeta^\star(\{3\}^n) \in \Q[\zeta(3n) \mid n\geq 1]$. Then i) follows by using Proposition \ref{prop:fstair} with $F=S$.
		
		For ii) we first notice that $S^\star_{1,2}(n) = \zeta^\star(\{1,2\}^n)$, which follows from \cite[Lemma 3.3]{BY} together with \cite[Equation 38]{HI}. Then using the formula
		\[ \zeta^\star(\{1,2\}^n) = \sum_{i_1 + 3i_3 + 5i_5 + \dots = n} \frac{2^{i_1+i_3+i_5+\dots} \zeta(3)^{i_1} \zeta(9)^{i_3} \zeta(15)^{i_5} \dots }{1^{i_1} i_1! 3^{i_3} i_3! 5^{i_5} i_5! \dots} \,, \]
		which can be found in  \cite[Equation 39]{HI}, we obtain the desired result by using Proposition \ref{prop:fstair} with $F=S^\star$. 
		
		The last statement iii) is a consequence of Lemma \ref{lem:a12} and Proposition \ref{prop:fstair} with $F=A$.
	\end{proof}

	We end this note by answering another question posed in \cite{BY} Section 5. There it was observed that the product $B_{1,3}(n-1) \cdot A_{1,3}(n)$ minus their `gluings' is always a rational multiple of $\pi^{8n}$.  For example in the case $n=2$, we have
	\begin{align*}
	\zeta\left(\ {\footnotesize \begin{ytableau}
		1 & *(gray)3\\
		*(gray)3
		\end{ytableau}}\ \right) \cdot 
	\zeta\left(\ {\footnotesize \begin{ytableau}
		\none & \none & 1 \\
		\none & 1 & *(gray)3 \\
		1 & *(gray)3 
		\end{ytableau}}\ \right)  
	- \zeta\left(\ {\footnotesize    \begin{ytableau}
		1 & *(gray)3 & 1 \\
		*(gray)3 & 1 & *(gray)3 \\
		1 & *(gray)3 
		\end{ytableau}}\ \right)&= 1074502\, \zeta(\{1,3\}^4)\,.
	\end{align*}
	We denote the gluing of a stair of $B_{1,3}(n-1)$ on top of $A_{1,3}(n)$ for $n\geq 1$ by the following Schur multiple zeta value of a Young tableau with shape $ (n+1, n+1, n, \dots, 3, 2) \slash (n-2, n-3,\dots, 2,1)$:
	\begin{align*}
	G_{1,3}(n)
	= {}
	\mathrlap{\phantom{\zeta 
			{ 
				\overbrace{\footnotesize
					\begin{ytableau}
					\none & \none &  1   &  *(gray)3  & 1        \\
					\none &  \adots   &  *(gray)3  & 1     &  *(gray)3    \\
					1  & \adots    & \adots &  *(gray)3  \\
					*(gray)3 & 1      & \adots \\
					1     & *(gray)3 
					\end{ytableau}}^{n+1}} }}
	\zeta\left(
	\ {
		\smash{\overbrace{\footnotesize
				\begin{ytableau}
				\none & \none &  1   &  *(gray)3  & 1        \\
				\none &  \adots   &  *(gray)3  & 1     &  *(gray)3    \\
				1  & \adots    & \adots &  *(gray)3  \\
				*(gray)3 & 1      & \adots \\
				1     & *(gray)3 
				\end{ytableau}}^{n+1}}        
		\mathclap{\phantom{\footnotesize\begin{ytableau}
				\none & \none &  1   &  *(gray)3  & 1        \\
				\none &  \adots   &  *(gray)3  & 1     &  *(gray)3    \\
				1  & \adots    & \adots &  *(gray)3  \\
				*(gray)3 & 1      & \adots \\
				1     & *(gray)3 
				\end{ytableau}}}
	}
	\ \right)   \,.
	\end{align*}
	The observation in \cite{BY} is then a direct consequence of our generalized Jacobi-Trudi formula.  We first give a Lemma containing a fun evaluation of \( S_{1,3}^\star(n) \) in terms of Bernoulli polynomials; the result on \( G_{1,3}(n) \) follows afterwards.
	\begin{lemma}\label{lem:stairstareval}
		For \( n \geq 1 \), the following evaluation holds
		\[
		S^\star_{1,3}(n) = 2 i B_{4 n+1}\left(\frac{1 - i}{2}\right) \frac{4^n \pi ^{4 n}}{(4 n+1)!} \,,
		\]
		where \( B_k(t) \) is the \( k \)-th Bernoulli polynomial.

		\begin{proof}
			First recall from \cite[Section 6.1]{HI} the following generating series expressions for \( \zeta(\{4\}^k) \) and \( \zeta^\star(\{4\}^{k}) \),
			\begin{align*}
			Z_4(t) &{}\coloneqq \sum_{k \geq 0} \zeta(\{4\}^k) \, t^k = \frac{\cosh(\sqrt{2} \pi \sqrt[4]{t}) - \cos(\sqrt{2} \pi \sqrt[4]{t})}{2 \pi^2 \sqrt{t}} \,, \\
			Z_4^\star(t) &{}\coloneqq \sum_{k \geq 0} \zeta^\star(\{4\}^k) \, t^k = \frac{\pi^2 \sqrt{t}}{\sin(\pi \sqrt[4]{t}) \sinh(\pi \sqrt[4]{t})} \, ,
			\end{align*}
			which are linked via \( Z_4(-t) Z_4^\star(t) = 1 \).  It follows from the evaluation in Theorem \ref{thm:SSstar} that
			\[
			S_{1,3}^\star(n) = [t^n] \big( Z_4(t) Z_4^\star(\tfrac{1}{4} t) \big) \,.
			\]
			One can check directly that \( Z_4(t) Z_4^\star(\frac{1}{4} t) \) is expressible as
			\[
			\frac{i}{2} \left(
			f\left(\frac{1-i}{2},i \sqrt{2} \pi \sqrt[4]{t}\right)
			+f\left(\frac{1-i}{2},\sqrt{2} \pi 
			\sqrt[4]{t}\right)
			-f\left(\frac{1+i}{2},i \sqrt{2} \pi 
			\sqrt[4]{t}\right)
			-f\left(\frac{1+i}{2},\sqrt{2} \pi  \sqrt[4]{t}\right)
			\right) \,,
			\]
			where
			\[
			f(a, x) \coloneqq \frac{e^{a x}}{e^x - 1} = \sum_{k \geq 0} B_{k}(a) \frac{x^{k-1}}{k!} \,
			\]
			is a shifted version of the generating series of the Bernoulli polynomials.  Extracting the coefficient of \( t^n \) corresponds to taking the \( (4n + 1) \)-th term in the expansions of~\( f \), so we obtain
			\[
			[t^n] \big( Z_4(t) Z_4^\star(\tfrac{1}{4} t) \big) =i \left( B_{4n+1}\left(\frac{1 - i}{2}\right) - B_{4n+1}\left(\frac{1+i}{2}\right) \right) \frac{4^n \pi^{4n}}{(4n + 1)!} \,.
			\]
			Under the symmetry \( B_{k}(x) = (-1)^k B_{k}(1-x) \) of Bernoulli polynomials, this simplifies to the expression given in the statement of the Lemma.
		\end{proof}
	\end{lemma}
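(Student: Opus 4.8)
The plan is to reduce the evaluation to a coefficient extraction from a product of generating series, and then to recognise that product as a combination of shifted generating series of Bernoulli polynomials. Write $Z_4(t)=\sum_{k\ge 0}\zeta(\{4\}^k)\,t^k$ and $Z_4^\star(t)=\sum_{k\ge 0}\zeta^\star(\{4\}^k)\,t^k$ for the generating series whose closed forms are recorded in \cite[Section~6.1]{HI}. By the evaluation of $S^\star_{1,3}(n)$ in Theorem~\ref{thm:SSstar} we have $S^\star_{1,3}(n)=\sum_{k=0}^{n}4^{-k}\zeta^\star(\{4\}^k)\zeta(\{4\}^{n-k})$, which is precisely the Cauchy product coefficient $[t^n]\bigl(Z_4(t)\,Z_4^\star(\tfrac14 t)\bigr)$, since $Z_4^\star(\tfrac14 t)=\sum_{k\ge 0}4^{-k}\zeta^\star(\{4\}^k)\,t^k$.

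The substantive step is to put $Z_4(t)\,Z_4^\star(\tfrac14 t)$ into closed form. Setting $u=\sqrt2\,\pi\,\sqrt[4]{t}$ one has $2\pi^2\sqrt t=u^2$ and $\pi\sqrt[4]{t/4}=u/2$, so substituting the two closed forms gives
\[
Z_4(t)\,Z_4^\star(\tfrac14 t)=\frac{\cosh u-\cos u}{4\,\sin(u/2)\,\sinh(u/2)}\,.
\]
Next I would apply $\cosh u-\cos u=\cos(iu)-\cos u=2\sin\tfrac{(1+i)u}{2}\sin\tfrac{(1-i)u}{2}$, together with the identity $\sin(\theta+\varphi)\sin(\theta-\varphi)=\sin^2\theta-\sin^2\varphi$ for $\theta=u/2$, $\varphi=iu/2$, and $\sin\varphi=i\sinh(u/2)$, to collapse the right-hand side to $\tfrac i2\bigl(\tfrac{\sin\theta}{\sin\varphi}-\tfrac{\sin\varphi}{\sin\theta}\bigr)$. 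Finally, inserting $\sinh(u/2)^{-1}=2e^{u/2}/(e^u-1)$ and $\sin(u/2)^{-1}=2i\,e^{iu/2}/(e^{iu}-1)$ rewrites each of the quotients $\tfrac{\sin\theta}{\sin\varphi}$, $\tfrac{\sin\varphi}{\sin\theta}$ as a difference of two values of $f(a,x)=e^{ax}/(e^x-1)=\sum_{k\ge0}B_k(a)\,x^{k-1}/k!$ at $x=u$ and at $x=iu$; keeping track of the powers of $i$ one arrives at
\[
Z_4(t)\,Z_4^\star(\tfrac14 t)=\tfrac i2\Bigl(f(\tfrac{1-i}{2},u)+f(\tfrac{1-i}{2},iu)-f(\tfrac{1+i}{2},u)-f(\tfrac{1+i}{2},iu)\Bigr)\,.
\]

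The coefficient extraction is then routine. Since both $u$ and $iu$ are constant multiples of $\sqrt[4]{t}$, in each $f(a,\cdot)$ only the index $k=4n+1$ contributes to $[t^n]$; moreover $i^{4n}=1$ and $(\sqrt2)^{4n}=4^n$, so both $f(a,u)$ and $f(a,iu)$ contribute $B_{4n+1}(a)\,\tfrac{4^n\pi^{4n}}{(4n+1)!}$. Adding the four terms gives $S^\star_{1,3}(n)=i\bigl(B_{4n+1}(\tfrac{1-i}{2})-B_{4n+1}(\tfrac{1+i}{2})\bigr)\tfrac{4^n\pi^{4n}}{(4n+1)!}$, and since $4n+1$ is odd the reflection identity $B_k(x)=(-1)^kB_k(1-x)$ gives $B_{4n+1}(\tfrac{1+i}{2})=-B_{4n+1}(\tfrac{1-i}{2})$, which yields the claimed evaluation.

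The hard part is the closed-form computation of $Z_4(t)Z_4^\star(\tfrac14 t)$: it is conceptually elementary but demands a careful chain of trigonometric manipulations — the product-to-sum expansion of $\cosh u-\cos u$, the $\sin(\theta+\varphi)\sin(\theta-\varphi)$ identity, and the rewriting of the hyperbolic quotients in $e^{ax}/(e^x-1)$ form — in which the factors of $i$ and of $\sqrt2$ must be tracked precisely so that exactly these four $f$-terms (and no leftover fractional powers of $t$) appear. Once that identity is established, the remaining steps are bookkeeping built on Theorem~\ref{thm:SSstar} and the known generating series.
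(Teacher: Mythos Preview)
Your proposal is correct and follows essentially the same route as the paper's proof: reduce $S^\star_{1,3}(n)$ to $[t^n]\bigl(Z_4(t)Z_4^\star(\tfrac14 t)\bigr)$ via Theorem~\ref{thm:SSstar}, rewrite that product as the four-term combination $\tfrac{i}{2}\bigl(f(\tfrac{1-i}{2},u)+f(\tfrac{1-i}{2},iu)-f(\tfrac{1+i}{2},u)-f(\tfrac{1+i}{2},iu)\bigr)$, extract the $(4n+1)$-th Bernoulli-polynomial term, and simplify with $B_k(x)=(-1)^kB_k(1-x)$. The only difference is that the paper records the four-term identity as something ``one can check directly'', whereas you spell out a concrete derivation via $\cosh u-\cos u=2\sin\tfrac{(1+i)u}{2}\sin\tfrac{(1-i)u}{2}$ and the partial-fraction rewriting of $\csc$ and $\operatorname{csch}$; this is a helpful elaboration but not a different argument.
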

	
	\begin{proposition} For $n\geq 1$ we have 
		\begin{align*}
		B_{1,3}(n-1) \cdot A_{1,3}(n) - G_{1,3}(n) =  \alpha_n \zeta(\{1,3\}^{2n}) \in \Q \pi^{8n} \,,
		\end{align*}
		where 
		\[
		\alpha_n = 8 i (8n + 1) \binom{8n}{4n} B_{4n+1}\left(\frac{1-i}{2} \right) \sum_{j=0}^{2n} (-1)^j \big( 1 - 2^{2j-1} \big) \big( 1 - 2^{4n-2j - 1} \big) \binom{4n}{2j} B_{2j} B_{4n-2j} \,.
		\]
		Here \( B_k \) is the \( k \)-th Bernoulli number, and \( B_k(x) \) the \( k \)-th Bernoulli polynomial.
	\end{proposition}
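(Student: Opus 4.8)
The plan is to read the identity straight off Theorem~\ref{thm:jacobiformzv}, applied to the natural outside decomposition of $G_{1,3}(n)$, and then to make the resulting constant explicit using Theorem~\ref{thm:SSstar} and Lemma~\ref{lem:stairstareval}. By construction $G_{1,3}(n)$ is the Schur multiple zeta value of a diagonally constant Young tableau on the edge-connected skew diagram $\lambda\slash\mu=(n+1,n+1,n,\dots,3,2)\slash(n-2,n-3,\dots,2,1)$, which has $4n$ boxes and content $c(\lambda\slash\mu)=\{-n,-n+1,\dots,n\}$, the entries being $1$ on the diagonals of content $\equiv n\pmod 2$ and $3$ on the others. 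This diagram is the disjoint union of the staircase ribbon $\theta_1$ running from its bottom-left box to its top-right box (of content $[-n,n]$, with the shape and filling of $A_{1,3}(n)$) and the ribbon $\theta_2$ lying on top of it (of content $[-n+1,n-1]$, with the shape and filling of $B_{1,3}(n-1)$). First I would check that $\Theta=(\theta_1,\theta_2)$ is indeed an outside decomposition and that the ribbon $R_\Theta$ of Definition~\ref{def:consubrib} is the staircase ribbon of content $[-n,n]$, i.e.\ has the same shape as $\theta_1$.

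Next I would apply Theorem~\ref{thm:jacobiformzv}, which gives
\[
G_{1,3}(n)=\det\begin{pmatrix}\zr(R^\kk_\Theta(1,1)) & \zr(R^\kk_\Theta(1,2))\\[4pt] \zr(R^\kk_\Theta(2,1)) & \zr(R^\kk_\Theta(2,2))\end{pmatrix},
\]
and then identify the four entries. By definition $R^\kk_\Theta(1,1)=\theta_1$ and $R^\kk_\Theta(2,2)=\theta_2$, so the diagonal entries are $A_{1,3}(n)$ and $B_{1,3}(n-1)$. The subribbon $R_\Theta(1,2)$ has content $[-n,n-1]$, i.e.\ it is $R_\Theta$ with its highest-content box removed; tracking its step sequence and its diagonal filling identifies the filled ribbon $R^\kk_\Theta(1,2)$ as exactly the $S^\star_{1,3}(n)$ stair. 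Symmetrically $R_\Theta(2,1)$ has content $[-n+1,n]$, i.e.\ $R_\Theta$ with its lowest-content box removed, and is the $S_{1,3}(n)$ stair. Expanding the $2\times 2$ determinant then yields
\[
B_{1,3}(n-1)\,A_{1,3}(n)-G_{1,3}(n)=S^\star_{1,3}(n)\cdot S_{1,3}(n).
\]
As a consistency check this gives $G_{1,3}(1)=\tfrac12\zeta(3)\zeta(5)-\tfrac{5}{16}\zeta(4)^2$ and the constants $\alpha_1=70$, $\alpha_2=1074502$, the latter matching the example preceding the statement.

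That the right-hand side lies in $\Q\pi^{8n}$ is now immediate: by Theorem~\ref{thm:SSstar} we have $S_{1,3}(n)=4^{-n}\zeta^\star(\{4\}^n)$ and $S^\star_{1,3}(n)=\sum_{k=0}^n 4^{-k}\zeta^\star(\{4\}^k)\zeta(\{4\}^{n-k})$, and since $\zeta(\{4\}^m),\zeta^\star(\{4\}^m)\in\Q\pi^{4m}$ both factors lie in $\Q\pi^{4n}$, so the product lies in $\Q\pi^{8n}=\Q\,\zeta(\{1,3\}^{2n})$. To pin down $\alpha_n$ I would divide by $\zeta(\{1,3\}^{2n})=2\pi^{8n}/(8n+2)!$ from \eqref{eq:31formula}, insert the evaluation of $S^\star_{1,3}(n)$ from Lemma~\ref{lem:stairstareval}, and insert the Bernoulli-number expansion
\[
\zeta^\star(\{4\}^n)=\frac{4\pi^{4n}}{(4n)!}\sum_{j=0}^{2n}(-1)^j\bigl(1-2^{2j-1}\bigr)\bigl(1-2^{4n-2j-1}\bigr)\binom{4n}{2j}B_{2j}B_{4n-2j},
\]
which follows from $Z_4^\star(t)=u^2/(\sin u\,\sinh u)$ with $u=\pi t^{1/4}$ together with $\tfrac{u}{\sinh u}=\sum_k(2-2^{2k})B_{2k}\tfrac{u^{2k}}{(2k)!}$, $\tfrac{u}{\sin u}=\sum_k(-1)^k(2-2^{2k})B_{2k}\tfrac{u^{2k}}{(2k)!}$ and extraction of $[t^n]$. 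Simplifying the factorials with $\binom{8n}{4n}=(8n)!/((4n)!)^2$ and $(8n+2)/(4n+1)=2$ collapses the product to the closed form for $\alpha_n$ in the statement.

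The crux is the middle step: verifying that the ``$A$ below, $B$ above'' pair is genuinely an outside decomposition of $G_{1,3}(n)$ with $R_\Theta$ the expected staircase, and — the delicate bookkeeping — matching the two off-diagonal subribbons together with their diagonal fillings precisely to the $S^\star_{1,3}(n)$ and $S_{1,3}(n)$ stairs (rather than to one another, or to some other stair family). Once the identity $B_{1,3}(n-1)A_{1,3}(n)-G_{1,3}(n)=S^\star_{1,3}(n)S_{1,3}(n)$ is in hand, the evaluation of $\alpha_n$ is routine, if slightly lengthy, Bernoulli-number manipulation.
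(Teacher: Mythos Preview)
Your proposal is correct and follows essentially the same route as the paper: apply the generalized Jacobi--Trudi determinant (via Theorem~\ref{thm:jacobiformzv} / Corollary~\ref{cor:ribbonjacobimzv}) with the $A$-type staircase as the ribbon $R_\Theta$ to obtain a $2\times 2$ determinant, read off $B_{1,3}(n-1)\,A_{1,3}(n)-G_{1,3}(n)=S_{1,3}(n)\,S^\star_{1,3}(n)$, and then plug in Theorem~\ref{thm:SSstar}, Lemma~\ref{lem:stairstareval}, the Bernoulli expansion of $\zeta^\star(\{4\}^n)$, and \eqref{eq:31formula} to extract the closed form of $\alpha_n$. The only cosmetic difference is that you place $S^\star_{1,3}(n)$ in position $(1,2)$ and $S_{1,3}(n)$ in $(2,1)$, whereas the paper writes the transpose; your placement is the one dictated by Definition~\ref{def:consubrib}~iv), and of course the determinant is unaffected.
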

	\begin{proof}
		Using Corollary \ref{cor:ribbonjacobimzv} with an $A$-type ribbon $R$  gives an outside decomposition $(\theta_1,\theta_2)$ with an $A$-type stair $\theta_1$ and a $B$-type stair $\theta_2$. We then immediately obtain 
		\begin{align*}
		G_{1,3}(n) =  \det\begin{pmatrix}
		A_{1,3}(n) &S_{1,3}(n) \\
		S^\star_{1,3}(n) & B_{1,3}(n-1)
		\end{pmatrix}
		\end{align*}
		and thus $    B_{1,3}(n-1) \cdot A_{1,3}(n) - G_{1,3}(n) =     S_{1,3}(n) S^\star_{1,3}(n) $.  So from Theorem  \ref{thm:SSstar} it already follows that \( B_{1,3}(n-1) \cdot A_{1,3}(n) - G_{1,3}(n) \in \pi^{8n} \Q \). \medskip
		
		The explicit formula for \( \alpha_n \) arises as follows.  From Lemma \ref{lem:stairstareval}, we have
		\[
		S^\star_{1,3}(n) = 2 i B_{4 n+1}\left(\frac{1 - i}{2}\right) \frac{4^n \pi ^{4 n}}{(4 n+1)!} \,.
		\]        
		Theorem \ref{thm:SSstar}, gives us \( S_{1,3}(n) = \frac{1}{4^n} \zeta^\star(\{4\}^n) \).  Using known formulae for the Taylor series of \( \csc(x) \) and \( \operatorname{csch}(x) \) to expand out the generating series of \( \zeta^\star(\{4\}^n) \) in Lemma \ref{lem:stairstareval}, gives
		\[
		\zeta^\star(\{4\}^n) = \frac{4 \pi^{4n}}{(4n)!}  \sum_{j = 0}^{2n} (-1)^j (1 - 2^{2j - 1})(1 - 2^{4n - 2j-1}) \binom{4n}{2j} B_{2j} B_{4n-2j} \,.
		\]
		From \eqref{eq:31formula}, we have
		\[
		\zeta(\{1,3\}^{2n}) = \frac{2\pi^{8n}}{(8n+2)!} \,.
		\]
		From these evaluations and the definition of \( \alpha_n \) as
		\[
		\alpha_n = \frac{B_{1,3}(n-1) \cdot A_{1,3}(n) - G_{1,3}(n)}{\zeta(\{1,3\}^{2n})} = \frac{S_{1,3}(n) S^\star_{1,3}(n)}{\zeta(\{1,3\}^{2n})}  \,,
		\]
		we obtain the required formula.
	\end{proof}
	
	From this formula we list some values of \( \alpha_n \) for small \( n \), to confirm the results in (5.2) and (5.3) of \cite{BY}, and the numerics for the weight 32 case $ n = 4 $ thereafter.
	\begin{center}
		\begin{tabular}{c|ccccc}
		$n$ & $1$ & $2$ & $3$ & $4$ & $5$ \\ \hline
		$\alpha_n$ & $70$ & $1074502$ & $\displaystyle\frac{9656199193420}{21}$ & $2222659435447178310$ & $\displaystyle\frac{766533703696349735861335868}{11}$
		 $\mathclap{\phantom{\Bigg)}}$ 
		\end{tabular}
	\end{center}\medskip
	
	After computing \( \alpha_n \) for larger values of \( n \), we note that the denominator is always appears to be \emph{much} smaller than the numerator, but as \( n \) increases the denominator also increases in a way that is not so clear from the formula.  For $ n = 9 $ we have denominator $ 133 $, for $ n = 15 $ we have denominator $ 1085 $, whereas for $ n = 23 $ we have denominator $ 206283 $.

\end{document}